\providecommand{\U}[1]{\protect\rule{.1in}{.1in}}
\newtheorem{X}{X}[section]
\newtheorem{conjecture}[X]{Conjecture}
\newtheorem{corollary}[X]{Corollary}
\newtheorem{lemma}[X]{Lemma}
\newtheorem*{*lemma}{Lemma}
\newtheorem{proposition}[X]{Proposition}
\newtheorem*{*proposition}{Proposition}
\newtheorem{theorem}[X]{Theorem}
\newtheorem*{*definition}{Definition}
\newtheorem*{*example}{Example}
\newtheorem{question}[X]{Question}
\newtheorem{remark}[X]{Remark}
\newtheorem{hypothesis}[X]{Hypothesis}
\theoremstyle{nonumberplain}
\newtheorem{proof}{Proof}
\titleformat*{\subsection}{\large\scshape}
\titleformat*{\subsubsection}{\slshape}
\titleformat*{\section}{\LARGE\bfseries}
\titleformat*{\subsection}{\Large\itshape}
\titleformat*{\subsubsection}{\scshape}
\titleformat*{\paragraph}{\itshape}
\setlist{nolistsep}
\newcommand{\eb}[1]{{\itshape\bfseries#1}}
\renewcommand{\emph}{\eb}
\newcommand{\bcomment}{}
\newcommand{\bfootnotesize}{\begin{footnotesize}}\newcommand\efootnotesize{\end{footnotesize}}
\newcommand{\bquote}{\begin{quote}}\newcommand\equote{\end{quote}}
\newcommand{\bsmall}{\begin{small}}\newcommand\esmall{\end{small}}
\newcommand{\btable}{\begin{table}}\newcommand{\etable}{\end{table}}
\newcommand{\edocument}{
\renewcommand{\ord}{\mathfrak{o}_{F}} 
\renewcommand{\l}{\ell_{u}}
\renewcommand{\ge}{\geqslant}
\renewcommand{\geq}{\geqslant}
\renewcommand{\le}{\leqslant}
\renewcommand{\leq}{\leqslant}
\newcommand{\OO}{\mathcal O}
\newcommand{\pp}{\rho_{\lambda}}
\newcommand{\e}{\varepsilon}
\newcommand{\lcm}{\operatorname{lcm}}
\renewcommand{\Mod}[1]{\ (\mathrm{mod}\ #1)}
\newcommand{\tha}{^{\text{th}}}
\newcommand{\NN}{\mathbb N}
\newcommand{\ZZ}{\mathbb Z}
\newcommand{\ii}{\item}
\newcommand\numberthis{\refstepcounter{equation}\tag{\theequation}}
\newcommand{\emaillink}[1]{\textcolor{blue}{\href{mailto:#1}{#1}}}

\begin{document}

\title{The Distribution of G.C.D.s of Shifted Primes and Lucas Sequences}
\author{Abhishek Jha and Ayan Nath}
\date{\vspace{-5ex}}
\maketitle

\begin{abstract}
    Let $(u_n)_{n \ge 0}$ be a nondegenerate Lucas sequence
    and $g_u(n)$ be the arithmetic function defined by $\gcd(n, u_n).$
    Recent studies have investigated the distributional characteristics of $g_u$.
    Numerous results have been proven based on the two extreme values $1$ and $n$ of $g_{u}(n)$. 
    Sanna investigated the average behaviour of $g_{u}$ and found asymptotic formulas for the moments of $\log g_{u}$.
    In a related direction, Jha and Sanna investigated properties of $g_{u}$ at shifted primes.
    
    In light of these results,
    we prove that for each positive integer $\lambda,$ we have 
    \begin{equation*}
    \sum_{\substack{p\le x\\p\text{ prime}}} (\log g_{u}(p-1))^{\lambda} \sim P_{u,\lambda}\pi(x),\end{equation*}
    where $P_{u, \lambda}$ is a constant depending on $u$ and $\lambda$ which is expressible as an infinite series. 
    Additionally, we provide estimates for $P_{u,\lambda}$ and $M_{u,\lambda},$ where $M_{u, \lambda}$ is the constant for an analogous sum obtained by Sanna [J. Number Theory \textbf{191} (2018), 305--315]. As an application of our results, we prove upper bounds on the count $\#\{p\le x : g_{u}(p-1)>y\}$ and also establish the existence of infinitely many runs of $m$ consecutive primes $p$ in bounded intervals such that $g_{u}(p-1)>y$ based on a breakthrough of Zhang, Maynard, Tao, et al. on small gaps between primes.
    Exploring further in this direction, it turns out that for Lucas sequences with nonunit discriminant, we have $\max\{g_{u}(n) : n \le x\} \gg x$.
    As an analogue, we obtain that that $\max\{g_u(p-1) : p \le x\} \gg x^{0.4736}$ unconditionally, while $\max\{g_u(p-1): p \le x\} \gg x^{1 - o(1)}$ under the hypothesis of Montgomery's or Chowla's conjecture.
\end{abstract}

\tableoc

\section{Introduction}
Let $(u_n)_{n \ge 0 }$ be an integral linear recurrence, that is, 
there exists $a_1,a_2,\ldots,a_k \in \ZZ$ with $a_k \ne 0 $ such that 
$$u_n = a_1 u_{n-1} + a_2 u_{n-2} + \cdots + a_k u_{n-k}$$
for all integers $n \ge k.$
The sequence is said to be \textit{nondegenerate} if none of the ratios $\alpha_i/\alpha_j,~i\ne j,$ is a root of unity, where $\alpha_1, \alpha_2,\ldots,\alpha_t$ are all the pairwise distinct zeroes of the \textit{characteristic polynomial} $$\psi_u(x) = x^k - a_1x^{k-1} - a_2 x^{k-2} - \cdots - a_k,$$
The sequence $(u_n)_{n\ge 0}$ is said to be a Lucas sequence if $u_0 = 0, u_1 = 1,$ and $k=2.$ 
Define $\Delta_u \colonequals a_1^2 + 4a_2$ to be the discriminant of the characteristic polynomial of $(u_n)_{n\ge 0}.$ 

Let $g_u(n)$ be the arithmetic function defined by $\gcd(n, u_n).$ 
Several authors have studied the distributional properties of $g_u.$
For instance, the set of all positive integers $n$ such that $u_n$ is divisible by $n$ has been studied by Alba Gonz\'{a}lez, Luca, Pomerance, and Shparlinski in \cite{gonzalez} under the hypothesis that the characteristic polynomial of $(u_n)_{n\ge 0}$ has only simple roots.
The same set was also studied by
Andr\'{e}-Jeannin \cite{jeannin}, Luca and Tron \cite{luca-tron}, Sanna \cite{sanna-ijnt}, and Somer \cite{lucas}, in the special
case in which $(u_n)_{n\ge 0}$ is a Lucas sequence or the Fibonacci sequence. 

On the other hand, Sanna and Tron \cite{sanna-tron-indag, sanna-gcd} have studied the fiber $g_{u}^{-1}(y)$ where $y=1$ and $(u_{n})_{n\ge 0}$ is nondegenerate, and in case $(u_{n})_{n\ge 0}$ is the Fibonacci sequence with $y$ being an arbitrary positive integer. The image $g_{u}(\mathbb{N})$ has been analysed by Leonetti and Sanna \cite{leonetti-sanna} in case $(u_{n})_{n\ge 0}$ is the Fibonacci sequence.
Similar problems, with $(u_n)_{n\ge 0}$ replaced by an elliptic divisibility sequence or by the orbit of $0$ under a polynomial
map, were also studied \cite{chen, gassert, gottschlich, JhaPreprint, kim, silverman-stange}. All recent developments in the study of $g_u$ have been discussed in a recent survey by Tron \cite{tron}.

The previous results give rather convincing answers to the problem of determining  extreme values of $g_{u}(n)$, however, obtaining information about its average behaviour and distribution as arithmetic function has recently got particular interest.
A natural question posed by Sanna in \cite{sanna} is--
\begin{question}[\cite{sanna}]
What is the average value of $g_u$? Or more generally, given a positive integer $\lambda > 0,$ is it possible to find an asymptotic for 
$$\sum_{n\le x} g_u(n)^{\lambda}$$
as $x$ is large?
\end{question}
Hereafter, we assume that $(u_n)_{n\ge 0}$ is a nondegenerate Lucas sequence with $a_1$ and $a_2$ relatively prime integers.
Given the oscillatory behaviour of $g_{u},$ which makes it hard to investigate, the author succeeded in finding an asymptotic for the logarithms of $g_u$. 
\begin{theorem}[\protect{\cite[Theorem 1.1]{sanna}}]\label{thm:sanna-main}
Fix a positive integer $\lambda$ and some $\e > 0.$ 
We have 
$$\sum_{n\le x} (\log g_u(n))^{\lambda} = M_{u,\lambda}\,x + E_{u, \lambda}(x),$$
where $M_{u, \lambda} > 0$ is a constant depending on $a_1, a_2,$ and $\lambda,$ and 
$$E_{u,\lambda}(x) \ll_{u,\lambda} x^{(3\lambda + 1)/(3\lambda + 2) + \e}.$$
\end{theorem}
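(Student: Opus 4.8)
The plan is to expand the moment using the additive structure of the logarithm over prime powers and to reduce everything to counting integers $n\le x$ that lie in a single residue class (namely $0$) modulo suitable integers. Writing $\Lambda$ for the von Mangoldt function, we have $\log g_u(n)=\sum_{d\mid g_u(n)}\Lambda(d)$, and since $d\mid g_u(n)$ exactly when $d\mid n$ and $d\mid u_n$, the divisibility $d\mid u_n$ is governed by the rank of apparition $\ell_u(d)$ (the least $m\ge1$ with $d\mid u_m$): for all but finitely many prime powers $d$ one has $d\mid u_n\iff\ell_u(d)\mid n$. Thus $\log g_u(n)=\sum_{d}\Lambda(d)\,\mathbf 1[\,\lcm(d,\ell_u(d))\mid n\,]$, the sum over prime powers $d$. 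Raising to the $\lambda$-th power and interchanging summation gives
\[
\sum_{n\le x}(\log g_u(n))^{\lambda}=\sum_{d_1,\dots,d_\lambda}\Big(\prod_{i=1}^{\lambda}\Lambda(d_i)\Big)\,\#\{n\le x: D\mid n\},
\]
where $D=D(d_1,\dots,d_\lambda)=\lcm\big(d_1,\dots,d_\lambda,\ell_u(d_1),\dots,\ell_u(d_\lambda)\big)$.

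Next I would extract the main term. Using $\#\{n\le x:D\mid n\}=x/D+O(1)$ and letting the tuple range over all prime powers yields the candidate main term $M_{u,\lambda}\,x$ with
\[
M_{u,\lambda}=\sum_{d_1,\dots,d_\lambda}\frac{\prod_{i}\Lambda(d_i)}{D(d_1,\dots,d_\lambda)}.
\]
Establishing that this series converges (so that $M_{u,\lambda}$ is a well-defined positive constant depending only on $a_1,a_2,\lambda$) is the first substantive point: since $\ell_u(p)\mid p-\left(\tfrac{\Delta_u}{p}\right)$ one has $\ell_u(p)\le p+1$, and, more importantly, each $p$ with $\ell_u(p)=m$ is a primitive prime divisor of $u_m$, so $p\equiv\pm1\Mod m$ and $\prod_{\ell_u(p)=m}p\le|u_m|\ll C^{m}$ for $C=\max_i|\alpha_i|$. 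These facts, which control how often $\ell_u(p)$ can be small, are what force the single-variable series $\sum_{p^k}\Lambda(p^k)/\lcm(p^k,\ell_u(p^k))$ to converge; the multivariable series then converges by a product/union-bound argument over the $\lambda$ coordinates.

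The error analysis is the crux. I would fix a threshold $z$ and split each tuple according to whether all $d_i\le z$ or some $d_i>z$. Writing $A(n)=\sum_{p^k\mid g_u(n),\,p^k\le z}\log p$ and $B(n)=\log g_u(n)-A(n)$, the tuples below $z$ reconstruct $\sum_{n\le x}A(n)^{\lambda}$: there the $O(1)$ counting errors accumulate to $O\big((\sum_{p^k\le z}\log p)^{\lambda}\big)=O(z^{\lambda})$, while replacing the truncated series by the full $M_{u,\lambda}$ costs $x$ times the series tail $R(z)$. The remaining tuples contribute $\sum_{n\le x}\big((\log g_u(n))^{\lambda}-A(n)^{\lambda}\big)\ll_\lambda(\log x)^{\lambda-1}\sum_{n\le x}B(n)$, and $\sum_{n\le x}B(n)=\sum_{p^k>z}\log p\,\big\lfloor x/\lcm(p^k,\ell_u(p^k))\big\rfloor$. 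The main obstacle is precisely here: sorting primes by the value $\ell_u(p)=m$ and using only the trivial bound $\#\{p:\ell_u(p)=m\}\le\omega(u_m)\ll m$ loses all power saving, because of primes with anomalously small rank of apparition. Overcoming this requires a genuinely nontrivial estimate for $\#\{p\le t:\ell_u(p)=m\}$ (equivalently, a strong bound for the number of $n\le x$ with $g_u(n)$ large). Granting such an estimate, both $R(z)$ and $\sum_{n\le x}B(n)$ are controlled by a negative power of $z$; its quality fixes the exponent, and optimizing the choice of $z$ as an appropriate power of $x$ against the $O(z^{\lambda})$ accumulation of floor errors yields the power-saving remainder $E_{u,\lambda}(x)\ll_{u,\lambda}x^{(3\lambda+1)/(3\lambda+2)+\varepsilon}$, with the $\varepsilon$ absorbing the logarithmic factors and any loss in the underlying divisor-type estimates.
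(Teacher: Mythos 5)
Your combinatorial reduction is exactly the one underlying Sanna's proof of this theorem (and mirrored in Section \ref{sec:3} of this paper for the shifted-prime analogue): expand $(\log g_u(n))^{\lambda}$ over $\lambda$-tuples of prime powers using $q^j \mid g_u(n) \iff (q,a_2)=1$ and $\lcm(q^j, z_u(q^j)) \mid n$, interchange sums, and write $\#\{n\le x : D\mid n\} = x/D + O(1)$; after grouping tuples by their least common multiple this is precisely $\sum_{(d,a_2)=1}\rho_\lambda(d)\lfloor x/\ell_u(d)\rfloor$, the analogue of \eqref{eqn:main}. The problem is that the analytic core of the theorem is absent from your proposal. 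The convergence of the series defining $M_{u,\lambda}$, the power saving, and the specific exponent $(3\lambda+1)/(3\lambda+2)$ all rest on the tail bound $\sum_{n\ge y,\ (n,a_2)=1}\rho_\lambda(n)/\ell_u(n) \ll_{u,\lambda} y^{-1/(1+3\lambda)+\varepsilon}$ (Lemma \ref{lem:bound-ell} here, i.e.\ Sanna's Lemma 2.6, which in turn rests on Lemma \ref{lm: prime}), and at exactly this point you proceed by ``granting such an estimate.'' Since the exponent in the theorem arises by balancing the accumulated $O(1)$ counting errors, of size about $y(\log y)^{\lambda}$, against a tail of size $x\,y^{-1/(1+3\lambda)+\varepsilon}$, asserting that the optimization ``yields'' $E_{u,\lambda}(x)\ll x^{(3\lambda+1)/(3\lambda+2)+\varepsilon}$ without proving a tail bound of this exact strength is circular: the exponent is an output of the missing lemma, not of the bookkeeping.

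Moreover, the one concrete mechanism you do offer in place of that lemma provably fails for $\lambda \ge 2$. You claim the multivariable series converges ``by a product/union-bound argument over the $\lambda$ coordinates.'' The natural such bound is $D \ge \max_i \lcm(d_i,z_u(d_i)) \ge \prod_{i=1}^{\lambda}\lcm(d_i,z_u(d_i))^{1/\lambda}$, which majorizes your series by $\left(\sum_{d}\Lambda(d)\,\lcm(d,z_u(d))^{-1/\lambda}\right)^{\lambda}$, the sum running over prime powers $d$ coprime to $a_2$. But $\lcm(d,z_u(d))\le 2d^2$ (Lemma \ref{lem:basic}-(\ref{ite:basic:upper})), so the inner sum is $\gg \sum_{d}\Lambda(d)\,d^{-2/\lambda}$, which diverges for every $\lambda\ge 2$. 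Hence no purely coordinatewise estimate can establish even the finiteness of $M_{u,\lambda}$: the entire difficulty is that $D$ can be far smaller than any product of single-coordinate quantities, owing to collisions among the ranks $z_u(d_i)$. The primitive-divisor facts you cite ($p\equiv\pm1\pmod{m}$ when $z_u(p)=m$, and $\prod_{z_u(p)=m}p\le|u_m|\ll C^m$) are indeed the right raw ingredients, but they have to be deployed as in Sanna's Lemmas 2.5--2.6 \cite{sanna}: one groups $n$ according to its largest prime factor, uses these counts to show that primes of anomalously small rank are rare, and only then sums over the composite structure; that is where the $\lambda$-dependent exponent $1/(1+3\lambda)$, and with it the error term of the theorem, actually comes from.
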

Furthermore, the author obtained an convergent infinite series for the constant $M_{u,\lambda},$ but before stating it we need to introduce some notations. For each positive integer $m$ relatively prime to $a_2,$ let $z_u(m)$ be the \textit{rank of appearance} of $m$ in the Lucas sequence $(u_n)_{n\ge 0},$ that is, $z_u(m)$ is the least positive integer $n$ such that $m$ divides $u_n.$ It is well known that the rank of appearance exists (see, e.g., \cite{rank} ). Also, define $\l(m)\colonequals \lcm(m, z_u(m)).$
\begin{theorem}[\protect{\cite[Theorem 1.2]{sanna}}]\label{thm:M-series}
For all positive integers $\lambda,$ we have
$$M_{u,\lambda}= \sum_{(m\,,\,a_2)\, =\, 1}\frac{\rho_\lambda(m)}{\l(m)},$$
where the sum runs over all positive integers relatively prime to $a_2.$
\end{theorem}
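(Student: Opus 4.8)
The plan is to obtain the series by inserting the von Mangoldt expansion $\log t = \sum_{d\mid t}\Lambda(d)$ into the sum $\sum_{n\le x}(\log g_u(n))^\lambda$ and reducing everything to a single divisibility condition controlled by $\l$. The starting observation is that $\gcd(a_1,a_2)=1$ forces every prime $p\mid a_2$ to satisfy $u_n\equiv a_1^{\,n-1}\not\equiv 0\pmod p$ for $n\ge 1$, so $g_u(n)=\gcd(n,u_n)$ is always coprime to $a_2$ and only divisors $d$ coprime to $a_2$ can occur. For such $d$ the rank of appearance gives the clean equivalence $d\mid g_u(n)\iff d\mid n\text{ and }z_u(d)\mid n\iff \l(d)\mid n$. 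More generally, for a tuple $d_1,\dots,d_\lambda$ one has $d_i\mid g_u(n)$ for all $i$ precisely when $\lcm(d_1,\dots,d_\lambda)\mid g_u(n)$, i.e. when $\l(\lcm(d_1,\dots,d_\lambda))\mid n$.

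Writing $(\log g_u(n))^\lambda=\sum_{d_1,\dots,d_\lambda\mid g_u(n)}\Lambda(d_1)\cdots\Lambda(d_\lambda)$, summing over $n\le x$, and interchanging the order of summation, the inner count of admissible $n$ becomes $\lfloor x/\l(\lcm(d_1,\dots,d_\lambda))\rfloor$, giving
\[
\sum_{n\le x}(\log g_u(n))^\lambda=\sum_{\substack{d_1,\dots,d_\lambda\\(d_i,a_2)=1}}\Lambda(d_1)\cdots\Lambda(d_\lambda)\left\lfloor\frac{x}{\l(\lcm(d_1,\dots,d_\lambda))}\right\rfloor .
\]
Replacing the floor by $x/\l(\cdot)+O(1)$ and grouping the tuples according to $m=\lcm(d_1,\dots,d_\lambda)$ isolates the main term $x\sum_{(m,a_2)=1}\rho_\lambda(m)/\l(m)$, where $\rho_\lambda(m)=\sum_{\lcm(d_1,\dots,d_\lambda)=m}\Lambda(d_1)\cdots\Lambda(d_\lambda)$ (note each $d_i$ coprime to $a_2$ forces $m$ coprime to $a_2$, matching the range of the claimed sum, and $\lambda=1$ recovers $\rho_1=\Lambda$). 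Comparing the coefficient of $x$ with the asymptotic $M_{u,\lambda}\,x$ furnished by Theorem~\ref{thm:sanna-main} then identifies $M_{u,\lambda}$ with the series.

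The main obstacle is making this identification rigorous: one must show the series converges and that the error from the floors and from truncating the tuple ranges is genuinely $o(x)$. The decisive input is a lower bound on $\l(m)$ relative to $m$. Since $\rho_\lambda(m)$ is supported on integers with at most $\lambda$ distinct prime factors and satisfies $\rho_\lambda(m)\ll_\lambda(\log m)^\lambda$, the bare bound $\l(m)\ge m$ is not enough, because $\sum_m(\log m)^\lambda/m$ diverges; the point is that $\l(m)=\lcm(m,z_u(m))$ carries the extra size of the rank of appearance, and for primes $z_u(p)\mid p-(\Delta_u/p)$ together with $z_u(p)\le p+1$ typically makes $\l(p)\asymp p\,z_u(p)$ of order $p^2$. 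I would therefore invoke the standard rank-of-appearance estimates to bound $\sum_{m>y}\rho_\lambda(m)/\l(m)$ and the number of tuples with $\l(\lcm)\le x$, so that choosing a truncation $y=y(x)$ controls both the tail of the series and the accumulated $O(1)$ terms. Establishing absolute convergence of $\sum_{(m,a_2)=1}\rho_\lambda(m)/\l(m)$ via these bounds, and confirming the error is smaller than the main term, is the technical heart of the argument; once it is in place, the comparison with Theorem~\ref{thm:sanna-main} completes the proof.
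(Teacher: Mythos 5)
Your proposal is correct and follows essentially the same route as the cited proof of Sanna (mirrored in this paper's Section~\ref{sec:3} for the shifted-prime analogue): expanding $(\log g_u(n))^\lambda$ over prime-power divisors (your von Mangoldt formulation is the same thing), reducing divisibility of $g_u(n)$ to the condition $\l(\lcm(d_1,\dots,d_\lambda))\mid n$ via Lemma~\ref{lem:basic}, grouping tuples by their lcm to produce $\rho_\lambda$, and truncating with tail bounds of the type in Lemma~\ref{lem:bound-ell} to identify the coefficient of $x$. The only caveat is that the tail estimate you defer to ``standard rank-of-appearance estimates'' is precisely the nontrivial Lemma 2.5/2.6 of Sanna's paper (quoted here as Lemmas~\ref{lm: prime} and~\ref{lem:bound-ell}), not an off-the-shelf fact, but you correctly flag it as the technical heart rather than assuming it away.
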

Recently, Mastrostefano \cite{mastrostefano} obtained a partial answer to the question posed by Sanna \cite{sanna} and proved a nontrivial upper bound on the moments of $g_{u}.$ Given the rich structure of $g_u$, and in order to investigate the relationships between shifted primes and Lucas sequences;
Jha and Sanna in \cite{jha-rnt} studied the set $$\mathcal P_k = \{p\le x : g_u(p-1) = k, ~ p \text{ prime}\}$$ for positive integers $k,$ and proved the existence of relative density of $\mathcal P_k$ in the set of prime numbers and also obtained it as an absolutely convergent series.
Furthermore, they proved bounds on the distribution of positive integers of the form $g_u(p-1)$ for primes $p.$ 
Exploring further in this direction we investigate the average behaviour and distribution of $g_{u}$ under shifted prime arguments. We prove the following theorem concerning the average behaviour of $g_{u}(p-1)$--
\begin{theorem}\label{THM:MAIN}
Fix a positive integer $\lambda$ and some $A, \e > 0.$ 
We have 
$$\sum_{p\le x} (\log g_u(p-1))^{\lambda} = P_{u,\lambda}\,\pi(x) + E_{u, \lambda}(x),$$
where $P_{u, \lambda} > 0$ is a constant depending on $a_1, a_2,$ and $\lambda,$ and 
$$E_{u,\lambda}(x)\ll_{u,\lambda} \frac{x}{(\log x)^A}.$$ Conditional on the Generalized Riemann Hypothesis (GRH), we have
$$E_{u,\lambda}(x) \ll_{u,\lambda} x^{(6\lambda + 3)/(6 \lambda + 4) + \e}.$$
\end{theorem}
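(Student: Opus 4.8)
The plan is to adapt Sanna's proof of Theorem~\ref{thm:sanna-main}, replacing the count of integers $n\le x$ lying in a residue class by a count of \emph{primes} in that class, and to pay for this replacement with the Bombieri--Vinogradov theorem unconditionally and with the explicit error term in the prime number theorem for arithmetic progressions under GRH. I would start from the von Mangoldt identity $\log N=\sum_{d\mid N}\Lambda(d)$, which gives
\[
(\log g_u(p-1))^{\lambda}=\sum_{d_1,\dots,d_\lambda\mid g_u(p-1)}\ \prod_{i=1}^{\lambda}\Lambda(d_i).
\]
The arithmetic input is the equivalence $d\mid g_u(n)\iff \ell_u(d)\mid n$; applied to $m\colonequals\lcm(d_1,\dots,d_\lambda)$ it turns the condition ``$d_i\mid g_u(p-1)$ for all $i$'' into the single congruence $p\equiv 1\Mod{\ell_u(m)}$. (Since $\gcd(a_1,a_2)=1$ forces $\gcd(u_n,a_2)=1$, every $m$ that occurs is automatically coprime to $a_2$, so $z_u(m)$ and $\ell_u(m)$ are defined.) Interchanging the order of summation and grouping by $m$ yields
\[
\sum_{p\le x}(\log g_u(p-1))^{\lambda}=\sum_{(m,a_2)=1}\rho_\lambda(m)\,\pi(x;\ell_u(m),1),
\]
where $\rho_\lambda(m)=\sum_{\lcm(d_1,\dots,d_\lambda)=m}\prod_i\Lambda(d_i)$ is the same weight appearing in Theorem~\ref{thm:M-series} and $\pi(x;q,a)$ counts primes $p\le x$ with $p\equiv a\Mod{q}$.

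Next I would extract the main term by replacing $\pi(x;\ell_u(m),1)$ with $\pi(x)/\varphi(\ell_u(m))$ (the difference between $\pi(x)$ and $\operatorname{li}(x)$ being absorbed into the error budget); summing over all $m$ identifies the constant
\[
P_{u,\lambda}=\sum_{(m,a_2)=1}\frac{\rho_\lambda(m)}{\varphi(\ell_u(m))},
\]
which is exactly $M_{u,\lambda}$ with $\ell_u(m)$ in the denominator replaced by $\varphi(\ell_u(m))$. Finiteness and positivity of $P_{u,\lambda}$ follow from the convergence of the series in Theorem~\ref{thm:M-series} together with $\varphi(q)\gg q/\log\log q$. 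The error is then $\sum_m\rho_\lambda(m)\bigl(\pi(x;\ell_u(m),1)-\pi(x)/\varphi(\ell_u(m))\bigr)$, which I would control by truncating at a parameter $Q$.

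For the range $\ell_u(m)\le Q$ I would re-index by the modulus $q=\ell_u(m)$, collecting the weight $R_\lambda(q)=\sum_{\ell_u(m)=q}\rho_\lambda(m)$. Since $m\mid\ell_u(m)$, one has $R_\lambda(q)\le\sum_{m\mid q}\rho_\lambda(m)\ll_\lambda\tau(q)(\log q)^{O(\lambda)}$, a harmless divisor-type weight. Under GRH the pointwise bound $|\pi(x;q,1)-\pi(x)/\varphi(q)|\ll\sqrt{x}\log x$ gives a contribution $\ll\sqrt{x}(\log x)\sum_{q\le Q}R_\lambda(q)\ll\sqrt{x}\,Q\,(\log x)^{O(\lambda)}$, using $\sum_{m\le Q}\rho_\lambda(m)\ll Q(\log Q)^{O(\lambda)}$ (the support of $\rho_\lambda$ consists of $m$ with at most $\lambda$ prime factors). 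Unconditionally I would instead take $Q=\sqrt{x}/(\log x)^{B}$ and bound $\sum_{q\le Q}R_\lambda(q)|E(x;q)|$ by pairing the trivial estimate $|E(x;q)|\ll x/\varphi(q)$ with the Bombieri--Vinogradov theorem through Cauchy--Schwarz; the divisor weight then costs only powers of $\log x$, giving $\ll x/(\log x)^{A}$. For the complementary range $\ell_u(m)>Q$ I would use the trivial bound $\pi(x;\ell_u(m),1)\le x/\ell_u(m)$ together with the companion tail of the main series, both governed by $\sum_{\ell_u(m)>Q}\rho_\lambda(m)/\ell_u(m)$.

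The crux --- and the step I expect to be hardest --- is showing that this tail enjoys a power saving in $Q$. The mechanism is that $\ell_u(m)=\lcm(m,z_u(m))$ is typically of order $m^2$: for a prime $p$ one has $z_u(p)\mid p-\left(\frac{\Delta_u}{p}\right)$, so $\gcd(p,z_u(p))=1$ and $\ell_u(p)=p\,z_u(p)$, with $z_u(p)$ large for all but few $p$. The quantitative inputs are that the primes with $z_u(p)\le y$ are the primitive prime divisors of $u_1u_2\cdots u_y$, whence $\#\{p:z_u(p)\le y\}\ll y^2$, and that a primitive prime divisor of $u_d$ satisfies $p\ge d-1$. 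Splitting according to whether $z_u$ is small or large, feeding in these estimates, and extending from prime $m$ to general $m$ via the near-multiplicativity of $\ell_u$, should yield $\sum_{\ell_u(m)>Q}\rho_\lambda(m)/\ell_u(m)\ll_\lambda Q^{-1/(3\lambda+1)+\e}$; this is precisely the quantitative form of the convergence already guaranteed by Theorem~\ref{thm:M-series}. Finally, balancing the GRH on-diagonal term $\sqrt{x}\,Q$ against the tail $x\,Q^{-1/(3\lambda+1)}$ fixes $Q$ and produces the exponent $(6\lambda+3)/(6\lambda+4)+\e$, while unconditionally the Bombieri--Vinogradov range caps the saving at $x/(\log x)^{A}$.
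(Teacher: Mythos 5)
Your reduction to $\sum_{(m,a_2)=1}\rho_\lambda(m)\,\pi(x;\ell_u(m),1)$, the identification of the main term $P_{u,\lambda}\pi(x)$ with $P_{u,\lambda}=\sum\rho_\lambda(m)/\varphi(\ell_u(m))$, and your unconditional treatment are exactly the paper's proof: your Cauchy--Schwarz pairing of the trivial bound with Bombieri--Vinogradov to absorb the divisor-type weight is the paper's Lemma \ref{thm:bomb-modify}, with your $R_\lambda(q)\le\tau(q)(\log q)^{\lambda}$ playing the role of $\gamma(q)\le\tau(q)$ together with $\rho_\lambda(m)\le(\log x)^{\lambda}$. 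The unconditional bound $x/(\log x)^A$ goes through as you describe, because there any power saving in the tail suffices.

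The gap is in the GRH part, and it stems from truncating by the \emph{modulus} $\ell_u(m)\le Q$ rather than by the \emph{index} $m\le z$. You need $\sum_{\ell_u(m)>Q}\rho_\lambda(m)/\ell_u(m)\ll Q^{-1/(3\lambda+1)+\varepsilon}$ and assert this is ``precisely the quantitative form of the convergence already guaranteed by Theorem \ref{thm:M-series}.'' It is not: the available tail estimate (the paper's Lemma \ref{lem:bound-ell}, i.e.\ Sanna's lemma) is indexed by $m$, namely $\sum_{m\ge y}\rho_\lambda(m)/\ell_u(m)\ll y^{-1/(3\lambda+1)+\varepsilon}$. Since $\ell_u(m)$ can be as large as $2m^2$, the condition $\ell_u(m)>Q$ reaches down to $m\approx(Q/2)^{1/2}$, so transferring the known estimate yields only $Q^{-1/(2(3\lambda+1))+\varepsilon}$; even a sharper two-range argument (counting the $m\le Q^{\theta}$ with $\ell_u(m)>Q$ trivially, applying Sanna's bound for $m>Q^{\theta}$, and optimizing $\theta$) gives only $Q^{-1/(3\lambda+2)+\varepsilon}$. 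Balancing $\sqrt{x}\,Q$ against these weaker tails produces error exponents $1-1/(12\lambda+6)$, respectively $1-1/(6\lambda+6)$, both strictly worse than the theorem's $(6\lambda+3)/(6\lambda+4)=1-1/(6\lambda+4)$. Your proposed route to the stronger modulus-indexed bound (primitive prime divisors, $z_u(p)\le y$ counts) is the machinery behind Sanna's index-indexed lemma, but it does not by itself control the level sets $\{m\le Q:\ell_u(m)>Q\}$, which is what your formulation requires; that would be genuinely new work.

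The repair is small and is what the paper does: split at $m\le z$. Then the on-diagonal GRH term is $\ll z\,x^{1/2}(\log x)^{\lambda+1}$ (at most $z$ indices, each weighted by $\rho_\lambda(m)\le(\log x)^{\lambda}$ and each contributing $\ll x^{1/2}\log x$ by Lemma \ref{thm:Siegel-Walfisz}), the tail is literally Sanna's $\sum_{m>z}\rho_\lambda(m)/\ell_u(m)\ll z^{-1/(3\lambda+1)+\varepsilon}$, and the choice $z=x^{(3\lambda+1)/(6\lambda+4)}$ gives $E_{u,\lambda}(x)\ll_{u,\lambda}x^{(6\lambda+3)/(6\lambda+4)+\varepsilon}$ as claimed.
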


\begin{theorem}\label{THM:P-SERIES}
For all positive integers $\lambda,$ we have
$$P_{u,\lambda}= \sum_{(n\, ,\, a_2) \,=\, 1}\frac{\rho_\lambda(n)}{\varphi(\l(n))},$$
where the sum runs over all positive integers relatively prime to $a_2.$
\end{theorem}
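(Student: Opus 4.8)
The plan is to pin down the constant $P_{u,\lambda}$ by retracing the main-term computation behind Theorem~\ref{THM:MAIN} and then regrouping the resulting series, exactly paralleling the bookkeeping that yields Theorem~\ref{thm:M-series}. The starting point is the identity $\log N=\sum_{d\mid N}\Lambda(d)$ applied to $N=g_u(n)$. Since $\Lambda$ is supported on prime powers and a prime power $q$ divides $g_u(n)=\gcd(n,u_n)$ precisely when $q\mid n$ and $q\mid u_n$, I would first record that $q\mid g_u(n)$ is equivalent to $\l(q)\mid n$: indeed $q\mid u_n\iff z_u(q)\mid n$, so the two conditions together say $\lcm(q,z_u(q))=\l(q)$ divides $n$. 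Every prime dividing $g_u(n)$ is automatically coprime to $a_2$ (if a prime $\ell\mid a_2$ divided $g_u(n)$ then $\ell\mid u_n$, impossible since $u_n\equiv a_1^{\,n-1}\not\equiv 0\pmod{\ell}$ because $\gcd(a_1,a_2)=1$), so $z_u(q)$ and $\l(q)$ are defined throughout. This yields
$$\log g_u(n)=\sum_{\substack{q\ \text{prime power}\\(q,a_2)=1}}\Lambda(q)\,\1[\l(q)\mid n].$$

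Next I would raise this to the $\lambda$-th power and expand, obtaining a $\lambda$-fold sum over tuples $(q_1,\dots,q_\lambda)$ of prime powers coprime to $a_2$, weighted by $\prod_i\Lambda(q_i)$ and carrying the indicator $\1[\l(q_i)\mid n\ \forall i]=\1[L\mid n]$ with $L=\lcm_i\l(q_i)$. Summing over primes $p\le x$ with $n=p-1$ turns each term into $\pi(x;L,1)$, the count of primes $p\le x$ with $p\equiv 1\pmod{L}$. The key algebraic simplification is that the rank of appearance commutes with least common multiples, $z_u(\lcm_i q_i)=\lcm_i z_u(q_i)$ (both sides have the same divisibility characterization), whence $L=\lcm_i\l(q_i)=\l(m)$ for $m\colonequals\lcm_i q_i$. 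This gives
$$\sum_{p\le x}(\log g_u(p-1))^{\lambda}=\sum_{(q_1,\dots,q_\lambda)}\Big(\prod_i\Lambda(q_i)\Big)\,\pi(x;\l(m),1).$$

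Inserting the prime number theorem in arithmetic progressions, $\pi(x;\l(m),1)\sim\pi(x)/\varphi(\l(m))$, each tuple contributes a main term $\pi(x)\prod_i\Lambda(q_i)/\varphi(\l(m))$; comparison with Theorem~\ref{THM:MAIN} identifies $P_{u,\lambda}=\sum_{(q_1,\dots,q_\lambda)}\prod_i\Lambda(q_i)/\varphi(\l(\lcm_i q_i))$. Grouping the tuples by the value $m=\lcm_i q_i$ collapses the numerator into $\rho_\lambda(m)=\sum_{\lcm_i q_i=m}\prod_i\Lambda(q_i)$, the same arithmetic function occurring in Theorem~\ref{thm:M-series}, and produces exactly $P_{u,\lambda}=\sum_{(m,a_2)=1}\rho_\lambda(m)/\varphi(\l(m))$. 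The only structural difference from $M_{u,\lambda}$ is that the density $1/\l(m)$ of the residue class $0\bmod \l(m)$ among all integers is replaced by the density $1/\varphi(\l(m))$ of the class $1\bmod\l(m)$ among the primes.

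The main obstacle is making the passage from the truncated sum to the infinite series rigorous, i.e.\ establishing absolute convergence of $\sum_m\rho_\lambda(m)/\varphi(\l(m))$ together with a negligible tail. I would truncate the tuple sum at prime powers $q_i\le z$, apply Siegel--Walfisz uniformly for the moduli $\l(m)\le(\log x)^B$ (this is what produces the unconditional error $x/(\log x)^A$, and GRH the sharper bound), and estimate the contribution of large moduli separately; letting $z\to\infty$ after $x\to\infty$ then forces convergence. For the convergence itself I would use that $\rho_\lambda$ is supported on integers with $\omega(m)\le\lambda$ and satisfies $\rho_\lambda(m)\ll_{\lambda,\e}m^{\e}$, that $m\mid\l(m)$, and the bound $\varphi(N)\gg N/\log\log N$, so that $\rho_\lambda(m)/\varphi(\l(m))\ll \rho_\lambda(m)\log\log\l(m)/\l(m)$. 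Convergence then reduces to that of the $\l$-weighted series of Theorem~\ref{thm:M-series}, the extra $\log\log$ factor being absorbed because the underlying tail estimates are power-saving. The genuinely delicate input, which I would import from the proof of Theorem~\ref{thm:M-series}, is the control of primes $p$ with anomalously small rank $z_u(p)$ (equivalently $p\mid u_d$ for small $d$), since these are what could make $\l(m)$ small and threaten convergence.
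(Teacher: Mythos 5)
Your proposal is correct and, at its core, is the same argument as the paper's: the paper also expands $(\log g_u(p-1))^\lambda$ over $\lambda$-tuples of prime powers, uses the lcm-compatibility of $\l$ (Lemma~\ref{lem:basic}) to arrive at the exact identity~(\ref{eqn:main}), recognizes the tuple sum at fixed lcm as $\rho_\lambda$, and obtains convergence of $\sum_n \rho_\lambda(n)/\varphi(\l(n))$ exactly as you propose, namely via $\varphi(N)\gg N/\log\log N$ together with Sanna's power-saving tail bound (that is precisely Lemma~\ref{lem:bound-phi-ell}, proved from Lemma~\ref{lem:bound-ell} by partial summation). Where you genuinely diverge is the unconditional error analysis. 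You invoke the classical Siegel--Walfisz theorem for moduli $\l(m)\le(\log x)^{B}$ (note the paper never states this; its Lemma~\ref{thm:Siegel-Walfisz} is the GRH bound), whereas the paper truncates at $d\le y\approx x^{1/4}/(\log x)^{B/2}$ and proves a weighted Bombieri--Vinogradov theorem (Lemma~\ref{thm:bomb-modify}), with weights $\gamma(d)\le\tau(d)$ (Lemma~\ref{lprop}) handled by Cauchy--Schwarz, because distinct $d$ may share the modulus $\l(d)$. Your route sidesteps that machinery entirely and still delivers $E_{u,\lambda}(x)\ll x/(\log x)^A$: with $z$ a sufficiently large power of $\log x$, the Siegel--Walfisz error summed over at most $(\log x)^{B}$ moduli is negligible, and the tail is $\ll x\sum_{d>z}\rho_\lambda(d)/\l(d)\ll x\,z^{-1/(1+3\lambda)+\e}\ll x/(\log x)^A$ by Lemma~\ref{lem:bound-ell}. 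So for Theorem~\ref{THM:P-SERIES} and the unconditional part of Theorem~\ref{THM:MAIN} this is a legitimate simplification; what it does not give is the paper's weighted Bombieri--Vinogradov input, which is reused elsewhere (the Remark improving the error term of the density result of Jha and Sanna).

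One caution: the closing device ``letting $z\to\infty$ after $x\to\infty$'' does not work as literally stated. For fixed $z$, the large-moduli tail estimated by the trivial bound $\pi(x;\l(d),1)\le x/\l(d)$ is $O(\delta(z)\,x)$ with $\delta(z)\to 0$, but $x/\pi(x)\sim\log x$, so after dividing by $\pi(x)$ the error is of size $\delta(z)\log x$, which blows up in the inner limit. You must either couple the truncation to $x$ --- take $z$ a power of $\log x$, which your Siegel--Walfisz range already forces, and which is the analogue of the paper's $x$-dependent cutoff $y$ --- or replace the trivial bound by Brun--Titchmarsh, $\pi(x;q,1)\ll x/\bigl(\varphi(q)\log(x/q)\bigr)$ for $q\le\sqrt{x}$, after which the tail is $\ll\pi(x)\sum_{d>z}\rho_\lambda(d)/\varphi(\l(d))$ and the double limit does close. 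Since the correct coupling is already implicit in your truncation at $(\log x)^B$, this is a fixable slip of phrasing rather than a structural gap.
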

As remarked by Sanna \cite{sanna} and Tron \cite{tron}, Theorems \ref{thm:sanna-main}  and \ref{thm:M-series} bear a formal resemblance with work of Luca and Shparlinski \cite[Theorem 2]{luca}. The authors studied sums of the form $\sum_{n\le x}f(u_{n})^{k}$ for arbitrary arithmetic functions $f$ satisfying some growth conditions, and obtained asymptotics of the form $\sum_{n\le x}f(u_{n})^{k}\sim M_{f,k}x$. They also pointed out
that $\log M_{f,k}\ll k \log k$. Motivated by these results, we obtain estimates of the constants $M_{u,\lambda}$ and $P_{u,\lambda}$ of Theorems \ref{thm:M-series} and \ref{THM:P-SERIES}, respectively.
\begin{theorem}\label{THM:P-ASY}
For each positive integer $\lambda,$ we have 
\begin{enumerate}
    \item $\log M_{u,\lambda} = \lambda\log\lambda + O_u(\lambda),$ where $M_{u,\lambda}$ is defined in Theorem \ref{thm:M-series}.
    \item $\log P_{u,\lambda} = \lambda\log\lambda + O_u(\lambda),$ where $P_{u,\lambda}$ is defined in Theorem \ref{THM:P-SERIES}.
\end{enumerate}

\end{theorem}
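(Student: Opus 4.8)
Throughout write $\l(n)=\lcm(n,z_u(n))$, and recall from Theorems~\ref{thm:M-series} and \ref{THM:P-SERIES} that both constants are given by convergent series with the same nonnegative coefficients, which are characterized by $\sum_{d\mid N}\rho_\lambda(d)=(\log N)^\lambda$ for every $N$ (equivalently $\rho_\lambda=\mu*(\log)^\lambda$). In particular $\rho_\lambda\ge 0$, and on prime powers $\rho_\lambda(p^k)=(\log p)^\lambda\bigl(k^\lambda-(k-1)^\lambda\bigr)$. A useful arithmetic simplification is that for every prime $p\nmid a_2$ one has $\gcd(p,z_u(p))=1$ and $z_u(p^k)=p^{\max(k-e,0)}z_u(p)$ with $e=v_p(u_{z_u(p)})\ge 1$; a short case check then gives the clean identity $\l(p^k)=p^k z_u(p)$ for all $k\ge 1$. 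The plan is to prove matching bounds $\lambda\log\lambda-\lambda+O_u(1)\le\log M_{u,\lambda}\le\lambda\log\lambda+O_u(\lambda)$ and likewise for $P_{u,\lambda}$, the lower bound coming from a single small prime and the upper bound from a fractional moment of $g_u$.

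For the lower bound in (1), fix the least prime $p$ with $p\nmid 2a_2\Delta_u$ and discard in Theorem~\ref{thm:M-series} all but the powers of $p$; since $\rho_\lambda\ge 0$ this only decreases the sum, giving
\[
M_{u,\lambda}\ \ge\ \sum_{k\ge1}\frac{\rho_\lambda(p^k)}{\l(p^k)}\ =\ \frac{(\log p)^\lambda}{z_u(p)}\Bigl(1-\tfrac1p\Bigr)\sum_{k\ge1}\frac{k^\lambda}{p^k},
\]
where I used $\l(p^k)=p^kz_u(p)$ and the rearrangement $\sum_k(k^\lambda-(k-1)^\lambda)x^k=(1-x)\sum_k k^\lambda x^k$. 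A Laplace (saddle-point) estimate of $\sum_k k^\lambda p^{-k}$, whose terms peak at $k^\ast\approx\lambda/\log p$, yields $\sum_k k^\lambda p^{-k}\ge\exp(\lambda\log\lambda-\lambda-\lambda\log\log p+O(\log\lambda))$; the factor $(\log p)^\lambda=e^{\lambda\log\log p}$ then cancels the $e^{-\lambda\log\log p}$, leaving $M_{u,\lambda}\ge c_u\exp(\lambda\log\lambda-\lambda)$ and hence $\log M_{u,\lambda}\ge\lambda\log\lambda-\lambda+O_u(1)$.

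For the upper bound in (1) I would avoid the series and argue pointwise: for every $n$, writing $X=\log g_u(n)\ge0$ and fixing $s\in(0,1)$, the single-term bound $e^{sX}\ge (sX)^\lambda/\lambda!$ gives $(\log g_u(n))^\lambda\le \lambda!\,s^{-\lambda}g_u(n)^s$. Summing over $n\le x$ and letting $x\to\infty$ (using Theorem~\ref{thm:sanna-main} on the left) yields $M_{u,\lambda}\le \lambda!\,s^{-\lambda}\,\mathcal E_s$, where $\mathcal E_s=\lim_{x}\tfrac1x\sum_{n\le x}g_u(n)^s$. Expanding $g_u(n)^s=\sum_d h(d)\,[\l(d)\mid n]$ with the multiplicative $h=\mu*(\cdot)^s\ge0$ (so $h(p^k)=p^{ks}-p^{(k-1)s}$) gives $\mathcal E_s=\sum_{(d,a_2)=1}h(d)/\l(d)$, and I claim this converges for $s=\tfrac12$. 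Indeed its leading contribution is $\sum_p h(p)/\l(p)\asymp\sum_p p^{s-1}/z_u(p)$, and grouping primes by $d=z_u(p)$ --- using $z_u(p)\le p+1$ (so $p\ge d-1$) together with $\#\{p:z_u(p)=d\}\le\omega(u_d)\ll d/\log d$ --- bounds this by $\sum_d d^{\,s-1}/\log d<\infty$. Thus $\mathcal E_{1/2}<\infty$, and taking $s=\tfrac12$ gives $\log M_{u,\lambda}\le\log\lambda!+\lambda\log2+O(1)=\lambda\log\lambda+O(\lambda)$, completing (1).

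Finally (2) follows along the same lines. Since $\varphi(\l(n))\le\l(n)$ and $\rho_\lambda\ge0$, Theorem~\ref{THM:P-SERIES} gives $P_{u,\lambda}\ge M_{u,\lambda}$ termwise, so the lower bound is inherited from (1). For the upper bound the identical pointwise inequality, now summed over primes $p\le x$ and combined with the equidistribution of primes in the progression $1\bmod\l(d)$ (so that $\#\{p\le x:\l(d)\mid p-1\}\sim\pi(x)/\varphi(\l(d))$), gives $P_{u,\lambda}\le\lambda!\,s^{-\lambda}\,\mathcal E_s'$ with $\mathcal E_s'=\sum_{(d,a_2)=1}h(d)/\varphi(\l(d))$. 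Using $\varphi(N)\gg_\delta N^{1-\delta}$ one reduces the convergence of $\mathcal E_s'$ to that of $\sum_d h(d)\l(d)^{\delta-1}$, which the same grouping-by-rank argument controls by $\sum_d d^{\,s+2\delta-1}/\log d<\infty$ provided $s+2\delta<1$ (e.g.\ $s=\tfrac12,\ \delta=\tfrac18$). This yields $\log P_{u,\lambda}\le\lambda\log\lambda+O_u(\lambda)$ and hence (2). I expect the main obstacle to be exactly the convergence of the fractional moments $\mathcal E_s,\mathcal E_s'$: everything hinges on controlling primes of anomalously small rank of appearance, which is where the arithmetic inputs $z_u(p)\le p+1$ and $\#\{p:z_u(p)=d\}\ll d/\log d$ enter and where the restriction $s<1$ is essential.
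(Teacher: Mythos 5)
Your lower bound and the reduction $P_{u,\lambda}\ge M_{u,\lambda}$ (from $\varphi(\l(n))\le\l(n)$, termwise) are correct. In fact your lower-bound route differs from the paper's: you keep only the powers of a single fixed prime $p\nmid 2a_2\Delta_u$, use $\l(p^k)=p^kz_u(p)$, and evaluate $\sum_k k^\lambda p^{-k}$ by a saddle-point estimate, whereas the paper keeps the prime terms $\rho_\lambda(q)=(\log q)^\lambda$ over \emph{all} primes $q>a_2$, uses $\l(q)\le 2q^2$, and evaluates $\sum_q(\log q)^\lambda/q^2$ by partial summation and the incomplete gamma function. Both give $\log M_{u,\lambda}\ge\lambda\log\lambda+O_u(\lambda)$, and yours is arguably cleaner.

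The upper bound, however, has a fatal gap. Everything rests on the finiteness of $\mathcal E_s=\sum_{(d,a_2)=1}h(d)/\l(d)$ with $h(d)\asymp d^s$ at $s=\tfrac12$, and your proof of that finiteness terminates in the series $\sum_d d^{\,s-1}/\log d$, which you assert converges. It does not: $\sum_d d^{\,s-1}/\log d$ converges only for $s<0$, and for $s=\tfrac12$ it diverges (compare $\sum_d d^{-1/2}$). The same slip occurs in part (2), where $\sum_d d^{\,s+2\delta-1}/\log d$ diverges for all admissible $s,\delta>0$. Moreover, this is not a repairable bookkeeping error within your strategy: the arithmetic inputs you invoke ($z_u(p)\le p+1$ and $\#\{p:z_u(p)=d\}\le\omega(u_d)\ll d/\log d$) are consistent with $u_d$ having $\asymp d/\log d$ primitive prime factors all of size $\asymp d$, in which case the prime part of $\mathcal E_s$ already behaves like the divergent series above; and the full series contains the subsum over integers with $\l(n)=n$ (equivalently $n\mid u_n$), each contributing $\asymp n^{s-1}$, whose convergence would require $\#\{n\le x:n\mid u_n\}\ll x^{1-s-\e}$ --- but the best known upper bound on that count is only $x^{1-o(1)}$. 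In other words, finiteness of the fractional moment $\lim_{x\to\infty}x^{-1}\sum_{n\le x}g_u(n)^s$ for a fixed $s>0$ is essentially an instance of Sanna's open question on the moments of $g_u$ (only partially answered by Mastrostefano), and cannot be assumed. The paper's proof avoids power weights entirely: it keeps the logarithmic bound $\rho_\lambda(n)\le(\log n)^\lambda$, writes $\rho_\lambda(n)/\varphi(\l(n))\ll(\log n)^{\lambda+1}/\l(n)$, and splits the sum into $n$ with $P(n)\ge y_u$ (controlled by the tail estimate of Lemma~\ref{lm: prime}, contributing $\ll_u(\lambda+1)!$) and $y_u$-smooth $n$ (controlled by the sparseness of smooth numbers, Lemma~\ref{lem:Phi-bound}, contributing $\le(\lambda+\lceil y_u\rceil)!$), each piece being $\exp(\lambda\log\lambda+O_u(\lambda))$. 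You would need to replace your fractional-moment step with an argument of this kind, in which the weights grow slower than any fixed power of $n$.
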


Another important direction to investigate is estimating the distribution function of $g_{u}$.
As an application of Sanna's results in \cite{sanna}, the author obtained an upper bound on the count $\#\{n\le x : g_{u}(n)>y\}$ for all $x,y>1$. Mastrostefano in \cite{mastrostefano} improved these bounds for a specific range of $y$. 
As a corollary of our Theorem \ref{THM:MAIN}, we obtain upper bounds on the count $\#\{p\le x : g_{u}(p-1)>y\}$ for all $x,y>1$. 
\begin{corollary}
For each positive integer $\lambda$, we have 
\begin{equation*}
    \#\{p\le x \,:\,g_{u}(p-1)\,>\,y\}\ll_{u,\lambda} \frac1{(\log y)^{\lambda}}\frac{x}{\log x},
\end{equation*}
for all $x,y>1$.
\end{corollary}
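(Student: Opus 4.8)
The plan is to deduce the corollary from Theorem~\ref{THM:MAIN} by a first-moment (Markov-type) argument, exactly mirroring the way Sanna derived the corresponding tail bound for $\#\{n \le x : g_u(n) > y\}$ from his moment asymptotic. First I would record the elementary monotonicity input. Since the statement assumes $y > 1$, we have $\log y > 0$, and the condition $g_u(p-1) > y$ forces $g_u(p-1) > y > 1$, hence $\log g_u(p-1) > \log y > 0$. Because $t \mapsto t^\lambda$ is strictly increasing on the positive reals for each fixed positive integer $\lambda$, this gives $(\log g_u(p-1))^\lambda > (\log y)^\lambda$ for every prime $p$ counted on the left-hand side.

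Next I would exploit the nonnegativity of the summands $(\log g_u(p-1))^\lambda$ to restrict the full moment sum to the large-gcd primes and bound each surviving term from below. This produces the chain
\begin{equation*}
(\log y)^\lambda \,\#\{p \le x : g_u(p-1) > y\} \le \sum_{\substack{p \le x \\ g_u(p-1) > y}} (\log g_u(p-1))^\lambda \le \sum_{p \le x}(\log g_u(p-1))^\lambda ,
\end{equation*}
so that dividing by $(\log y)^\lambda$ reduces everything to an upper bound on the full $\lambda$-th moment of $\log g_u(p-1)$. I would then invoke Theorem~\ref{THM:MAIN} with a fixed admissible exponent (taking $A = 1$ already suffices) to write
\begin{equation*}
\sum_{p \le x}(\log g_u(p-1))^\lambda = P_{u,\lambda}\,\pi(x) + E_{u,\lambda}(x),
\end{equation*}
and bound both pieces by $x/\log x$: the main term obeys $P_{u,\lambda}\,\pi(x) \ll_{u,\lambda} x/\log x$ by the prime number theorem (or merely Chebyshev's bound), while the error satisfies $E_{u,\lambda}(x) \ll_{u,\lambda} x/(\log x)^A \le x/\log x$ for $A \ge 1$. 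Combining and substituting back yields precisely $\#\{p \le x : g_u(p-1) > y\} \ll_{u,\lambda} (\log y)^{-\lambda}\, x/\log x$.

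The only remaining point is the uniformity over the entire range $x, y > 1$ asserted in the statement. For $y$ bounded away from $1$ and $x$ large the estimate is immediate from the above; the edge cases are harmless, since when $y \to 1^+$ the factor $(\log y)^{-\lambda}$ blows up and when $x \to 1^+$ the factor $x/\log x$ blows up, so in either regime the claimed right-hand side is large while the left-hand side never exceeds $\pi(x)$, and the bound holds trivially after adjusting the implied constant. I do not expect a genuine obstacle here: the entire analytic content is already packaged inside Theorem~\ref{THM:MAIN}, and the only step of substance is the passage from the moment asymptotic to the tail count via the monotonicity of $t \mapsto (\log t)^\lambda$, which converts a single large value of $g_u(p-1)$ into a correspondingly large contribution to the $\lambda$-th moment.
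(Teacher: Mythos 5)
Your proof is correct and is exactly the intended deduction: the paper states this corollary without an explicit proof, as an immediate consequence of Theorem~\ref{THM:MAIN}, and the implicit argument (mirroring Sanna's analogous corollary) is precisely your Markov-type bound $(\log y)^\lambda\,\#\{p\le x : g_u(p-1)>y\}\le\sum_{p\le x}(\log g_u(p-1))^\lambda\ll_{u,\lambda}x/\log x$. Your handling of the edge cases (small $x$, and $y$ near $1$ or exceeding $x$) is also sound.
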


Exploring further in this direction, we prove that for a fixed $y>0$, there exist infinitely many runs of $m$ consecutive primes in short intervals such that $g_{u}(p-1)>y$. The result is essentially based on a recent remarkable framework of Zhang, Maynard, Tao, et al \cite{maynard} on small gaps between primes. In, particular, we require a theorem of Freiberg~\cite[Theorem 1]{Frieberg} on consecutive primes in arithmetic progressions.

\begin{proposition}\label{gap}
Let $p_1=2<p_2=3<\cdots$ be the sequence of prime numbers. Let $a$ and $q$ be a relatively prime pair of integers, and $m\ge 2$ be an integer. For infinitely many $n$, we have 
\begin{equation*}
    p_{n+1}\equiv \cdots \equiv p_{n+m} \equiv a \Mod{q} \text{ and } p_{n+m}-p_{n+1}\le qB_{m}.
\end{equation*}
It has been shown in \cite[Theorem 1.1]{maynard} that we can take $B_{m}=c\,m^{3}\,e^{4m}$ for an absolute and effective constant $c$.
\end{proposition}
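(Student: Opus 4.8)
The plan is to deduce this directly from Freiberg's Theorem~1 \cite[Theorem 1]{Frieberg}, which refines the Maynard--Tao sieve so as to produce runs of \emph{consecutive} primes lying in a prescribed residue class. Recall the shape of that machinery: one fixes an admissible $k$-tuple $\mathcal H=\{h_1,\dots,h_k\}$ (admissible meaning that for every prime $\ell$ the reductions $h_1,\dots,h_k$ do not cover all residues modulo $\ell$), attaches to it the Maynard--GPY weights, and shows that for a suitable $k=k(m)$ infinitely many integers $t$ have the property that at least $m$ of the shifted values $t+h_1,\dots,t+h_k$ are prime. My first step is to recall Freiberg's statement in the form: given an admissible $\mathcal H$ of large enough size, there are infinitely many $t$ for which the window $t+\mathcal H$ contains exactly $m$ primes and these are \emph{consecutive} in the global list of primes.

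To land these primes in the progression $a \Mod q$, I would restrict the translation variable by $t \equiv a \Mod q$ and work with the dilated tuple $q\mathcal H=\{qh_1,\dots,qh_k\}$ in place of $\mathcal H$, so that every value $t+qh_i \equiv a \Mod q$ automatically. One checks that dilation preserves admissibility: for $\ell \nmid q$ the map $h\mapsto qh$ is a bijection modulo $\ell$, so $q\mathcal H$ omits a residue class modulo $\ell$ exactly when $\mathcal H$ does, while for $\ell \mid q$ all of the $qh_i$ collapse to $0 \Mod \ell$ and hence occupy a single class. The diameter of the dilated tuple is $q(h_k-h_1)=q\,\mathrm{diam}(\mathcal H)$, so the resulting $m$ consecutive primes all lie in an interval of length $q\,\mathrm{diam}(\mathcal H)$.

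Finally I would insert the explicit input from \cite[Theorem 1.1]{maynard}: one may choose the admissible tuple $\mathcal H$ realizing $m$ simultaneous primes with $\mathrm{diam}(\mathcal H)\le B_m=c\,m^3 e^{4m}$. Combined with the dilation bound above, this yields $p_{n+m}-p_{n+1}\le qB_m$, and relabelling the produced consecutive primes as $p_{n+1},\dots,p_{n+m}$ gives the statement. The genuinely hard step is none of this bookkeeping but the consecutiveness assertion itself, namely that no prime in a \emph{different} residue class modulo $q$ intrudes into the window strictly between the first and last of our $m$ primes; this is precisely what Freiberg secures by pairing the Maynard lower-bound sieve with a Selberg-type upper-bound sieve that caps the total number of primes in the interval, and it is the only part of the argument not reducible to elementary manipulation of the tuple.
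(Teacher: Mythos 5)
Your proposal reconstructs a proof of a statement that the paper itself never proves: Proposition~\ref{gap} is a citation, not a theorem of the authors. The cited \cite[Theorem 1]{Frieberg} is (up to the explicit value of $B_m$) exactly this statement about consecutive primes in a prescribed progression, and \cite[Theorem 1.1]{maynard} is invoked only to supply the numerical bound $B_m = c\,m^3e^{4m}$; no proof environment follows the proposition in the paper. Measured against that, your sketch is broadly faithful to how Freiberg actually argues (dilated admissible tuples, the modulus $q$ built into the Maynard--Tao machinery, plus a device excluding intervening primes), but as a deduction it has a genuine gap. You recall Freiberg's input in tuple form --- for admissible $\mathcal H$, infinitely many $t$ give $m$ consecutive primes inside $t+\mathcal H$ --- and then try to get the progression condition by passing to $q\mathcal H$ and ``restricting the translation variable by $t\equiv a \Mod{q}$.'' That restriction is not available from outside the black box: the tuple-form statement hands you an infinite set of good $t$, and nothing prevents that set from missing the class $a \Mod{q}$ entirely. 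What dilation alone buys is that the $m$ primes $t+qh_i$ are all congruent \emph{to one another} modulo $q$ (they are all $\equiv t$), not that they lie in the prescribed class $a$. Forcing the class requires imposing $t\equiv a \Mod{q}$ inside the sieve itself (absorbing $q$ into the modulus $W$ of the Maynard--Tao weights and re-checking admissibility of the resulting linear forms), i.e.\ reopening the proof rather than invoking its conclusion. The statement with that restriction built in is precisely Freiberg's Theorem 1 --- which is the proposition --- so your derivation is circular as written.

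A secondary inaccuracy: the consecutiveness in Freiberg's argument (following Banks--Freiberg--Turnage-Butterbaugh) is not obtained by pairing the Maynard lower-bound sieve with a Selberg-type upper-bound sieve. It comes from an Erd\H{o}s--Rankin-style covering construction: $t$ is placed in a residue class modulo a product of auxiliary primes chosen so that every integer in the window \emph{other than} the tuple values is automatically composite, so the primes detected by the sieve have no possible competitors in any residue class. Upper-bound-sieve arguments for consecutiveness do occur elsewhere in this literature (e.g.\ on limit points of normalized prime gaps), but attributing that mechanism here misdescribes the cited proof. Since the key step you defer to Freiberg is, in fact, the entire proposition, the cleanest correct treatment is the paper's own one-line citation; a self-contained proof would require running the covering construction and the sieve simultaneously with the congruence $t \equiv a \Mod{q}$ imposed from the start.
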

The above proposition gives the following immediate corollary.
\begin{corollary}
For positive integers $m$ and $y$, we have infinitely many runs of $m$ consecutive primes such that $g_u(p_{n+i}-1)>y$ for each $0\le i \le m-1$ and 
\begin{equation*}
    \liminf_{n\,\rightarrow\,\infty}\, (p_{n+m-1}-p_{n})<C\,y^{2}\,m^{3}e^{4m},
\end{equation*}
where $p_{n}$ denotes the $n^{th}$ prime and $C$ is an absolute constant. Moreover, if $\Delta_{u} \ne 1$, we have 
\begin{equation*}
    \liminf_{n\,\rightarrow\,\infty}\, (p_{n+m-1}-p_{n}) \le C\,y\,m^{3}e^{4m},
\end{equation*}
 and in case $m=2$, we have 
\begin{equation*}
    \liminf_{n\,\rightarrow\,\infty}\, (p_{n+1}-p_{n}) \le 246y.
\end{equation*}
\end{corollary}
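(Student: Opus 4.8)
The plan is to feed a single, cleverly chosen arithmetic progression into Proposition~\ref{gap}. Observe first that if $p\equiv 1\Mod{\l(d)}$ for some integer $d$ coprime to $a_2$, then $d\mid p-1$ and $z_u(d)\mid p-1$; the latter gives $d\mid u_{p-1}$, so that $d\mid\gcd(p-1,u_{p-1})=g_u(p-1)$ and hence $g_u(p-1)\ge d$. Thus, taking $a=1$ and $q=\l(d)$ in Proposition~\ref{gap} (legitimate since $\gcd(1,\l(d))=1$), we obtain infinitely many runs of $m$ consecutive primes $p_{n+1}\equiv\cdots\equiv p_{n+m}\equiv 1\Mod{\l(d)}$, each satisfying $g_u(p_{n+i}-1)\ge d$, lying in a window of length at most $\l(d)\,B_m$ with $B_m=cm^3e^{4m}$. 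Everything therefore reduces to choosing $d>y$ so as to make $\l(d)=\lcm(d,z_u(d))$ as small as possible.

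For the first, unconditional bound I would take $d$ to be a prime in $(y,2y]$ not dividing $2a_2\Delta_u$; this exists for all large $y$, since only finitely many primes divide the fixed integer $2a_2\Delta_u$ while Bertrand's postulate supplies a prime in the interval. For such $d$ one has $z_u(d)\mid d-\left(\frac{\Delta_u}{d}\right)$, whence $z_u(d)\le d+1$ and $\l(d)\le d(d+1)\ll y^2$. The window bound becomes $\l(d)B_m\ll y^2m^3e^{4m}$, which, after reindexing the run as $p_n,\dots,p_{n+m-1}$, yields $\liminf_{n\to\infty}(p_{n+m-1}-p_n)<Cy^2m^3e^{4m}$ and simultaneously the asserted existence of infinitely many such runs with $g_u(p_{n+i}-1)>y$.

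To gain the extra factor of $y$ when $\Delta_u\ne 1$, I would replace the prime $d$ by a divisor coming from the large-values phenomenon. For $\Delta_u\ne 1$ we have $\max_{n\le x}g_u(n)\gg x$; applying this with $x\asymp y$ produces $n^*\ll y$ with $d\colonequals g_u(n^*)=\gcd(n^*,u_{n^*})>y$. Since $d\mid n^*$ and $d\mid u_{n^*}$ force both $d\mid n^*$ and $z_u(d)\mid n^*$, we get $\l(d)=\lcm(d,z_u(d))\mid n^*\ll y$; moreover $d$ is automatically coprime to $a_2$, because the recurrence gives $u_n\equiv a_1^{\,n-1}\pmod{\ell}$ for any prime $\ell\mid a_2$ (using $\gcd(a_1,a_2)=1$), so no such $\ell$ divides any $u_n$. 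Feeding this $d$ into the scheme above gives a window of length $\l(d)B_m\ll ym^3e^{4m}$, i.e. $\liminf_{n\to\infty}(p_{n+m-1}-p_n)\le Cym^3e^{4m}$. For $m=2$ one uses the sharp bounded-gaps-in-progressions constant $B_2=246$ of Maynard--Tao and Polymath in place of $cm^3e^{4m}$, and optimizes the choice of $d$ (taking $d$ with $z_u(d)\mid d$, so that $\l(d)=d$) to reach $\liminf_{n\to\infty}(p_{n+1}-p_n)\le 246y$.

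The main obstacle is the $\Delta_u\ne 1$ step: everything hinges on exhibiting a divisor $d>y$ with $\l(d)=\lcm(d,z_u(d))=O(y)$, which forces $z_u(d)$ to share essentially all of its size with $d$ (ideally $z_u(d)\mid d$, i.e. $d\mid u_d$). This is precisely the content packaged in the estimate $\max_{n\le x}g_u(n)\gg x$, whose proof rests on the existence of primitive prime divisors of $u_n$ via Carmichael's theorem; controlling the implied constant there is exactly what decides whether the clean value $246$ survives in the final bound, and it is the quantitative heart of the argument.
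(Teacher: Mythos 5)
Your proposal is correct and runs on the same engine as the paper's proof: feed the progression $1 \Mod{\ell_u(d)}$ into Proposition~\ref{gap} for a suitable $d>y$ coprime to $a_2$, so that Lemma~\ref{lem:basic}-(\ref{ite: gcd}) forces $d\mid g_u(p-1)>y$ for every prime in the run, and then make $\ell_u(d)$ small. The differences are only in how you pick $d$. For the first bound, the paper takes an arbitrary $s\in(y,2y]$ and uses $\ell_u(s)\le 2s^2$ (Lemma~\ref{lem:basic}-(\ref{ite:basic:upper})), whereas you take a prime $d\in(y,2y]$ avoiding $2a_2\Delta_u$ and invoke $z_u(d)\mid d-\left(\frac{\Delta_u}{d}\right)$; both give $\ell_u(d)\ll y^2$, but your route needs Bertrand and only works verbatim for large $y$ (small $y$ must be absorbed into the constant, a defect the paper's version shares in milder form, since it too tacitly needs $(s,a_2)=1$). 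For the second bound, the paper uses the ramified prime directly: a prime $p\mid\Delta_u$ has $\ell_u(p^r)=p^r$ by Lemma~\ref{lem:basic}-(\ref{ite:ell}), so $d=p^{\lceil\log_p y\rceil}$ has $\ell_u(d)=d\ll_u y$. You instead route through $\max\{g_u(n):n\le x\}\gg x$ and extract $d=g_u(n^*)$ with $\ell_u(d)\mid n^*\ll y$; this is logically sound (and your coprimality argument $u_n\equiv a_1^{n-1}$ modulo primes dividing $a_2$ is correct), but it is the same ingredient once removed, since that max-order estimate is itself proved in the paper's introduction via exactly those ramified prime powers --- not, as you say, via Carmichael's primitive divisor theorem, which plays no role here. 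Finally, your $m=2$ step (``optimize the choice of $d$'') is hand-wavy: since $d\mid\ell_u(d)$, one cannot have $\ell_u(d)\le y<d$, so the clean constant $246y$ really carries a factor bounded by the ramified prime, an imprecision present in the paper's own proof as well; citing \cite[Theorem 3.2]{polymath} for $B_2=246$ is all that is needed there.
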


\begin{proof}
    Choose a positive integer $s\in (y, 2y]$ and consider the arithmetic progression $(1 + \ell(s)n)_{n = 1}^{\infty}$. Noting that $\l(n)\le 2n^{2}$ (see Lemma \ref{lem:basic}-(\ref{ite:basic:upper})) and applying Proposition ~\ref{gap}, we get the first result.
For the second part, as $\Delta_{u}\ne \pm1$, we have a prime $p\mid \Delta_{u}$ such that $\l(p^{r})=p^{r}$ for any positive integer $r$. Consider $g=p^{\lceil{\log_{p}y}\rceil}$ and the arithmetic progression $(1 + gn)_{n = 1}^{\infty}$. Applying Proposition~\ref{gap} again, we get that there exist infinitely many runs $p_n, p_{n+1},\ldots, p_{n+m-1}$ of $m$ consecutive primes such  that $p_{n+m-1}-p_{n}\le C\,y\,m^{3}e^{4m}$ and $g_u(p_{n+i} - 1)>y,~ 0\le i \le m-1$. In the case of $m=2$, it has been obtained that $B_{2}=246$ works in \cite[Theorem 3.2]{polymath}, which gives us the last assertion.
\end{proof}

Having considered the distribution of $g_{u}$, it is natural to consider the problem of analysing the growths of $\max\,\{g_{u}(n)\,:\,n\le x\}$ and $\max\,\{g_{u}(p-1)\,:\,n\le x\}.$ 
For the remainder of the paper, we assume that $\Delta_u\ne 1$. 

It is easy to prove that
$$x\ll\max\,\{g_{u}(n)\,:\,n\le x\}\le x.$$
To see this, we have the trivial upper bound $\max\,\{g_{u}(n)\,:\,n\le x\} \le x$. Since $\Delta_u\ne \pm\,1,$ we have a prime $p\mid \Delta_{u}$ such that $\l(p^{r})=p^{r}$ for any positive integer $r.$ Thus, we get that $\max\,\{g_{u}(n)\,:\,n\le x\}\ge p^{\lfloor{\log _{p} x \rfloor}} \ge x/p$. This gives us desired conclusion. 

One can observe that the problem of obtaining estimates for $\max\,\{g_{u}(n)\,:\,n\le x\}$ is related to the study of positive integers $n$ such that $n\mid u_n$. 
If we were able to demonstrate that for any sufficiently large $x$, there is an integer $n\in (x-o(x),x)$ such that $n\mid u_n,$ we could then prove $\max\,\{g_{u}(n)\,:\,n\le x\}\,\sim\, x$.
If we consider the case of Fibonacci sequence $(F_n)_{n\ge 0}$, then it is known that at least $x^{1/4}$ positive integers $n$ less than $x$ satisfy $n\mid F_{n}$ \cite[Theorem 1.3]{gonzalez}, which is similar to the lower bound of $x^{1/3}$ obtained for Carmichael numbers less than $x$ \cite{harman}.
Luca and Tron \cite{luca-tron} pointed out that one should expect heuristics for self-Fibonacci divisors to be similar to those for Carmichael numbers. 
Larson \cite{carmichael} recently proved that for all $\delta>0$ and $x\gg_{\delta} 1$, there exist at least $e^{\log x/(\log\log x)^{2+\delta}}$ Carmichael numbers between $x$ and $x+x/(\log x)^{1/2+\delta}$. 
Thus, it is reasonable to expect similar results to hold for positive integers $n$ dividing $u_n$.
Based on this observation, we make the following conjecture--

\begin{conjecture}
For Lucas sequences $(u_n)_{n \ge 0}$ with $\Delta_u \ne 1,$
$$\max\,\{g_{u}(n)\,:\,n\le x\}\sim x.$$
\end{conjecture}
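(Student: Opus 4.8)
The plan is to establish the stronger statement, flagged in the discussion preceding the conjecture, that for every sufficiently large $x$ the short interval $(x - x/(\log x)^{1/2+\delta},\,x)$ contains an integer $n$ with $n\mid u_n$. Granting this, any such $n$ satisfies $n\sim x$ and $g_u(n)=n$, so that $\max\{g_u(n):n\le x\}\ge n\sim x$, which combined with the trivial bound $\le x$ yields the asymptotic. Restricting to $n$ coprime to $a_2$ so that the rank of appearance is defined, the divisibility $n\mid u_n$ is equivalent to $z_u(n)\mid n$. For squarefree $n=p_1\cdots p_r$ one has $z_u(n)=\lcm(z_u(p_1),\ldots,z_u(p_r))$, and each local rank obeys $z_u(p)\mid p-\left(\frac{\Delta_u}{p}\right)$. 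Hence it suffices to produce squarefree $n$ in the target interval all of whose prime factors $p$ satisfy $z_u(p)\mid n$.

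This is exactly the shape of the Alford--Granville--Pomerance construction of Carmichael numbers, with Korselt's condition $p-1\mid n-1$ replaced by the rank condition $z_u(p)\mid n$, and I would adapt Larson's short-interval refinement \cite{carmichael} to the present setting. First I would fix a highly composite modulus $L$ and collect the primes $p\nmid a_2\Delta_u$ with $z_u(p)\mid L$; by the relation $z_u(p)\mid p-(\Delta_u/p)$ these are precisely the primes lying in prescribed residue classes modulo the divisors of $L$. One then seeks a subset $S$ of them whose product $n=\prod_{p\in S}p$ lands in $(x-x/(\log x)^{1/2+\delta},\,x)$ while satisfying $z_u(p)\mid n$ for every $p\in S$; the placement of the product in a prescribed short interval, together with the simultaneous control of the residues of $n$, is carried out by the Maynard--Tao and Matom\"aki--Radziwi\l\l{} type machinery underlying \cite{carmichael}. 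The Fibonacci analogue is already known to furnish at least $x^{1/4}$ self-divisors up to $x$ \cite{gonzalez}, and the heuristic agreement with the count of Carmichael numbers \cite{harman} is what makes the short-interval strengthening plausible.

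The hard part will be twofold. Korselt's criterion is a single congruence $p\equiv 1$, whereas here the requirement $\lcm_{p\in S}z_u(p)\mid\prod_{p\in S}p$ couples the primes through their individual ranks; forcing each $z_u(p)\mid n$ amounts to building $n$ so that $n\equiv(\Delta_u/p)\Mod{z_u(p)}$ holds simultaneously for all $p\in S$, and reconciling this bookkeeping with squarefreeness and with membership in a short interval is the delicate combinatorial core. More seriously, the construction demands quantitative equidistribution of the rank-restricted primes $p$ with $z_u(p)\mid L$ (equivalently, of primes in arithmetic progressions governed by the splitting of $\Delta_u$), and securing this unconditionally in the ranges required appears to lie beyond current technology; the conditional nature of the later estimate $\max\{g_u(p-1):p\le x\}\gg x^{1-o(1)}$ under Montgomery's or Chowla's conjecture reflects exactly this gap. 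I therefore expect that an unconditional proof is out of reach, and that the realistic target is a proof conditional on GRH-type equidistribution for the primes controlled by the rank function $z_u$.
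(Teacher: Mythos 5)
First, be clear about what you are comparing against: this statement is a \emph{conjecture} in the paper. The authors prove only the two-sided bound $x \ll \max\{g_u(n) : n \le x\} \le x$ (the lower bound coming from prime powers $p^r$ with $p \mid \Delta_u$, for which $\ell_u(p^r) = p^r$), and they support the full asymptotic with exactly the heuristic chain you reproduce: the reduction to self-divisors $n \mid u_n$ in intervals $(x - o(x), x)$, the $x^{1/4}$ count of Fibonacci self-divisors, the Luca--Tron analogy with Carmichael numbers, and Larson's short-interval Carmichael theorem. Your proposal does not go beyond that heuristic. After the (correct) observation that a self-divisor in $(x - x/(\log x)^{1/2+\delta}, x]$ for every large $x$ would give the asymptotic, every substantive step is deferred to machinery you do not supply, and you end by conceding that the required equidistribution is beyond current technology and that an unconditional proof is out of reach. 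That assessment is honest, but it means what you have written is a research program, not a proof: nothing is established, conditionally or otherwise.

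Second, the program itself contains a concrete error that makes it unworkable as formulated. The condition $z_u(p) \mid n$ is the congruence $n \equiv 0 \pmod{z_u(p)}$, not $n \equiv \left(\frac{\Delta_u}{p}\right) \pmod{z_u(p)}$ as you write; the Legendre symbol enters only through $z_u(p) \mid p - \left(\frac{\Delta_u}{p}\right)$, i.e., in parametrizing which primes have rank dividing a chosen modulus. This is precisely where the Korselt analogy breaks: $p-1 \mid n-1$ is a condition modulo integers automatically coprime to $n$, which is why Alford--Granville--Pomerance can freely select a subset of admissible primes whose product is $1 \pmod{L}$; by contrast, $\lcm_{p \mid n} z_u(p) \mid n$ forces every prime factor of every rank $z_u(p)$ to itself divide $n$, and (since $n$ is squarefree) forces each $z_u(p)$ to be squarefree. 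This closure requirement is fatal to your squarefree reduction. For the Fibonacci sequence, which has $\Delta_u = 5 \ne 1$ and hence falls under the conjecture, the only squarefree self-divisors are $n = 1$ and $n = 5$: if $p$ is the smallest prime factor of a squarefree $n \mid F_n$, then $z_u(p)$ must be a squarefree product of prime factors of $n$, all $\geq p$, while $z_u(p) \le p+1$ and $\gcd(z_u(p),p)=1$ unless $p \mid \Delta_u$; this forces $p = 5$, and iterating the argument on the remaining prime factors (noting $z_u(2)=3$ would drag in $3$, whose rank $z_u(3)=4$ is not squarefree) leaves no room for any other prime. So the object your construction aims to produce simply does not exist near $x$ for Fibonacci. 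Any viable attack must work with non-squarefree $n$ built from prime powers --- as the paper's own $\gg x$ argument and the actual Fibonacci self-divisors $12, 24, 36, \ldots$ already indicate --- and that changes the combinatorial structure of the problem entirely.
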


Finding nontrivial lower bounds on the shifted prime analogue $\max\,\{g_{u}(p-1)\,:\,p\le x\}$ is notably more difficult. 
For the ease of notation, let us set $$\mathcal G(x)\,\colonequals \,\max\,\{g_{u}(p-1)\,:\,p\le x\}.$$
Our next theorem is in this direction.
\begin{theorem}\label{THM:MAXBOUND}
Let $(u_{n})_{n\ge 0}$ be a Lucas sequence such that $\Delta_u\ne 1.$ Then we have that $\mathcal{G}(x)\gg x^{0.4736}$ unconditionally, while under Montgomery's conjecture (see Hypothesis ~\ref{hypo:mont}) or Chowla's conjecture (see ~\cite{chowla1934}), we obtain that $\mathcal{G}(x)\gg x^{1-o(1)}$.
\end{theorem}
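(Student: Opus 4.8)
The plan is to reduce the lower bound on $\mathcal{G}(x)$ to the existence of primes in arithmetic progressions. The basic mechanism is the elementary observation already used in the paper: if $(m, a_2) = 1$ and $\l(m) = \lcm(m, z_u(m))$ divides $n$, then $m \mid g_u(n)$, because $\l(m) \mid n$ forces both $m \mid n$ and $z_u(m) \mid n$, the latter giving $m \mid u_n$. Taking $n = p - 1$, every prime $p \le x$ with $p \equiv 1 \Mod{\l(m)}$ satisfies $g_u(p-1) \ge m$. Since $\Delta_u \ne \pm 1$, I fix a prime $p_0 \mid \Delta_u$; as noted earlier, $\l(p_0^r) = p_0^r$ for every $r$, so the modulus one must hit equals the divisor one gains. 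Thus
$$\mathcal{G}(x) \;\ge\; \max\bigl\{\, p_0^r : \text{some prime } p \le x \text{ satisfies } p \equiv 1 \Mod{p_0^r} \,\bigr\},$$
and the task becomes to find the largest power of the fixed prime $p_0$ dividing $p-1$ for some prime $p \le x$.

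For the two conditional bounds I would use this reduction directly with the single modulus $p_0^r$. Montgomery's conjecture (Hypothesis~\ref{hypo:mont}) yields $\psi(x; q, 1) \sim x/\varphi(q)$ uniformly for $q$ as large as $x^{1-\e}$, while Chowla's conjecture gives that the least prime $\equiv 1 \Mod q$ is $\ll_\e q^{1+\e}$. Either input produces a prime $p \le x$ in the class $1 \Mod{p_0^r}$ for $p_0^r$ up to $x^{1 - o(1)}$; choosing $r$ maximal with this property gives $\mathcal{G}(x) \ge p_0^r \gg x^{1 - o(1)}$.

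The unconditional bound is the genuinely hard part, since for the sparse sequence of moduli $p_0^r$ no pointwise equidistribution is available beyond the Siegel--Walfisz range, and Linnik's theorem reaches only moduli of size about $x^{1/5}$. To break the single-modulus barrier I would not fix the modulus but average. I replace $\{p_0^r\}$ by a family $\mathcal{M}$ of admissible moduli $m \in (M, 2M]$, coprime to $a_2$ and chosen so that the loss factor $\l(m)/m = z_u(m)/\gcd(m, z_u(m))$ is small, and I study $\sum_{m \in \mathcal{M}} \pi(x; \l(m), 1)$. Expanding each term as $\pi(x)/\varphi(\l(m))$ plus an error and summing, the Bombieri--Vinogradov theorem controls the total error provided all the moduli $\l(m)$ stay below the level of distribution $x^{1/2 - \e}$; if the main term $\sum_{m \in \mathcal{M}} \pi(x)/\varphi(\l(m))$ exceeds this error, then $\pi(x; \l(m), 1) > 0$ for at least one admissible $m$, which furnishes a prime $p \le x$ with $g_u(p-1) \ge m \asymp M$.

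The main obstacle --- and the source of the numerical exponent --- is the balancing act in choosing $\mathcal{M}$. Two constraints pull against each other: the Bombieri--Vinogradov level forces $\l(m) \le x^{1/2 - o(1)}$, hence (via the apparition law $z_u(\ell) \mid \ell - \left(\tfrac{\Delta_u}{\ell}\right)$, which governs how large $z_u(m)$ can be) caps $M$ well below $x^{1/2}$; meanwhile $\mathcal{M}$ must be dense enough in $(M, 2M]$ for the main term to dominate the error. Carrying out this optimization --- controlling the distribution of $z_u(m)$, equivalently the interplay between the multiplicative structure of $m$ and its rank of apparition, across a set of moduli dense enough to run the large sieve --- is the delicate analytic input, and it is this optimization that degrades the clean exponent $1/2$ coming from the level of distribution down to the stated value $0.4736$.
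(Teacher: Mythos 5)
Your reduction (via Lemma \ref{lem:basic}-(\ref{ite: gcd}) and a prime $p_0\mid\Delta_u$ with $\l(p_0^r)=p_0^r$) and both conditional bounds coincide with the paper's proof: under Montgomery's conjecture (Hypothesis \ref{hypo:mont}) or Chowla's conjecture one has $p(1,p_0^r)\ll_{\e} p_0^{r(1+\e)}$, and maximizing $r$ gives $\mathcal{G}(x)\gg x^{1-o(1)}$.

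The unconditional part, however, contains a genuine gap, and it is exactly where you abandon the single prime-power modulus. Your Bombieri--Vinogradov averaging scheme cannot produce any positive exponent, for a structural reason. Writing $\mathcal{M}\subset(M,2M]$ for your family, the main term is $\pi(x)\sum_{m\in\mathcal{M}}1/\varphi(\l(m))$, while the averaged error is only bounded by $x/(\log x)^A$; since $|\mathcal{M}|\le M$ and $\varphi(\l(m))\gg \l(m)/\log\log x$, the main term can dominate only if a proportion at least $(\log x)^{-O(1)}$ of the integers $m\in(M,2M]$ satisfy $\l(m)\ll m(\log x)^{O(1)}$, i.e.\ $z_u(m)\le (\log x)^{O(1)}\gcd(m,z_u(m))$. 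Such $m$ are far too sparse: already in the most favourable case $\l(m)=m$, which is equivalent to $m\mid u_m$, the theorem of Alba Gonz\'alez, Luca, Pomerance, and Shparlinski \cite{gonzalez} (cited in this paper) bounds the number of such $m\le T$ by $T^{1-(1+o(1))\log\log\log T/\log\log T}$, which is $o\big(T/(\log T)^A\big)$ for \emph{every} fixed $A$; the mildly relaxed condition you need is of the same nature and nothing suggests it is any denser. So for every admissible choice of $\mathcal{M}$ the main term loses to the error term, and the method detects no prime in any of the progressions. Symptomatically, your exponent $0.4736$ is never derived --- it is attributed to an optimization that is not (and cannot be) carried out.

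The idea you are missing is that the generic Linnik barrier ($p(a,d)\ll d^{5}$, hence moduli up to about $x^{1/5}$) does not apply here, precisely because the relevant moduli are powers of a \emph{fixed} prime: for such moduli one knows the much stronger bound $p(a,q^{k})\ll q^{2.1115\,k}$ (\cite[Theorem 3.6]{li}). The paper applies this directly to the single modulus $\l(p_0^{r})=p_0^{r}$, exactly as in your conditional argument, solving $p_0^{2.1115\,r}\le x$ for the maximal $r$; this immediately gives $\mathcal{G}(x)\gg x^{1/2.1115}\gg x^{0.4736}$, with no averaging over moduli at all.
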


The paper is organized as follows. In Section \ref{sec:2}, we state some well-known results and prove preliminary lemmas. In Section \ref{sec:3}, we prove Theorems \ref{THM:MAIN} and \ref{THM:P-SERIES}. Proofs of Theorems \ref{THM:P-ASY} and \ref{THM:MAXBOUND} are presented in Sections \ref{sec:4}, and \ref{sec:5}, respectively.

\subsection*{Notations}
We employ the Landau-Bachman ``Big Oh'' and ``little oh'' notations $O$ and $o,$ as well as the associated Vinogradov symbols $\ll$ and $\gg,$ with their usual meanings.
Any dependence of implied constants is explicitly stated or indicated with subscripts. Notations like $O_u$ and $o_u$ are shortcuts for $O_{a_1, a_2}$ and $o_{a_1,a_2}, $ respectively.
Throughout, the letters $p$ and $q$ reserved for prime numbers. We write $(a,b)$ or $\gcd(a,b)$ to denote the greatest common divisor of $a$ and $b,$ and $[a,b]$ or $\lcm(a,b)$ to denote the least common multiple of the same.
As usual, denote by $\tau(n),$ $\omega(n),$ and $P(n),$ for the number of divisors, the number of prime factors, and the greatest prime factor, of a positive integer $n,$ respectively.

For every $x > 0$ and for all integers $a$ and $b$, let $\pi(x;b,a)$ be the number of primes $p \leq x$ such that $p \equiv a \Mod b.$ Also denote the error in prime number theorem as
\begin{equation*}
\Delta(x; b, a) \colonequals  \pi(x;b,a) - \frac{\pi(x)}{\varphi(b)}.
\end{equation*}
Lastly, the incomplete gamma function $\Gamma$ is defined as
$$\Gamma(s,x) \colonequals \int_{x}^{\infty}t^{s-1}e^{-t}\mathrm d t=\int_{e^x}^{\infty}\frac{(\log t)^{s + 1}}{t^2}\mathrm d t.$$

\section{Lemmas and Preliminaries}\label{sec:2}
In what follows, let $(u_n)_{n\ge 0}$ be a nondegenerate Lucas sequence with $\gcd(a_1,a_2) = 1.$ 
Note that the discriminant $\Delta_u \ne 0$ as $(u_n)_{n\ge 0}$ is nondegenerate. 

\begin{lemma}\label{lem:basic}
For all positive integers $m,n,j$ and and for all prime numbers $p\nmid a_2,$ we have:
\begin{enumerate}
    \item  \label{ite: gcd} $m\mid g_u(n)$ if and only if $(m,a_2) = 1$ and $\l(m) \mid n.$
    \item \label{ite:lcm}$\lcm(\l(m),\l(n)) = \l(\lcm(m,n)),$ whenever $(mn,a_2) = 1.$
    \item \label{ite:ell}$\l(p^j) = p^j z_u(p)$ if $p\nmid \Delta_u,$ and $\l(p^j) = p^j$ if $p\mid \Delta_u.$
    \item \label{ite:basic:upper} $\l(n)\leq 2n^{2}.$ 
\end{enumerate}
\end{lemma}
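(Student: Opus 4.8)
The plan is to deduce all four items from the standard arithmetic of the rank of appearance $z_u$, so the real work lies in isolating the right two black boxes and then assembling them. First I would record two facts. (F1): for every $m$ with $(m,a_2)=1$ one has $m\mid u_n$ if and only if $z_u(m)\mid n$, and moreover $(u_n,a_2)=1$ for all $n\ge 1$. The ``if and only if'' is the usual description of the set of indices killed by $m$ (see \cite{rank}); the coprimality I would prove by hand, since $(a_1,a_2)=1$ gives $q\nmid a_1$ for each prime $q\mid a_2$, and then an induction on the recurrence yields $u_n\equiv a_1^{\,n-1}\Mod q$ for all $n\ge 1$, so $q\nmid u_n$. (F2): for $(mn,a_2)=1$ one has $z_u(\lcm(m,n))=\lcm(z_u(m),z_u(n))$, which falls out of (F1) because $\lcm(m,n)\mid u_N\iff m\mid u_N$ and $n\mid u_N\iff \lcm(z_u(m),z_u(n))\mid N$.

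Given these, parts (1) and (2) are short. For (1), $m\mid g_u(n)$ says $m\mid n$ and $m\mid u_n$; by (F1) the second condition forces $(m,a_2)=1$ together with $z_u(m)\mid n$, and combined with $m\mid n$ this is exactly $\l(m)=\lcm(m,z_u(m))\mid n$. The converse is the same chain read backwards. For (2), I would just expand, invoking (F2) and associativity of the least common multiple:
\[
\l(\lcm(m,n))=\lcm\big(m,n,z_u(m),z_u(n)\big)=\lcm\big(\lcm(m,z_u(m)),\lcm(n,z_u(n))\big)=\lcm(\l(m),\l(n)).
\]

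For (3) the input is the standard prime-power law: for $p\nmid a_2$ one has $z_u(p^j)=p^{\max(0,\,j-e)}z_u(p)$ with $e=v_p(u_{z_u(p)})\ge 1$, together with $z_u(p)=p$ when $p\mid\Delta_u$ and $z_u(p)\mid p-\big(\tfrac{\Delta_u}{p}\big)$ (hence $(z_u(p),p)=1$) when $p\nmid\Delta_u$. I would first note that $p\mid\Delta_u$ forces $p\nmid a_2$, since $(a_1,a_2)=1$ and $p\mid a_2$ would give $p\mid a_1^2$, so $z_u$ is defined at these $p$. If $p\mid\Delta_u$ then $z_u(p^j)=p^{\max(1,\,j-e+1)}$ is a power of $p$ dividing $p^j$, whence $\l(p^j)=\lcm(p^j,z_u(p^j))=p^j$; if $p\nmid\Delta_u$ then the prime-to-$p$ part of $z_u(p^j)$ is $z_u(p)$ while its $p$-part is at most $p^j$, so $\lcm(p^j,z_u(p^j))=p^j z_u(p)$.

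Finally, for (4) I would write $\l(n)=\lcm(n,z_u(n))\le n\,z_u(n)$ and reduce everything to the single estimate $z_u(n)\le 2n$; then $\l(n)\le n\cdot 2n=2n^2$ follows at once. By (F2) it suffices to control $z_u(n)=\lcm_{p^{j}\|n}z_u(p^{j})$, and the prime-power law bounds each local factor by $z_u(p^{j})\le p^{j-1}(p+1)$ (using $e\ge 1$ and $z_u(p)\le p+1$), with $z_u(p^{j})\mid p^{j}$ when $p\mid\Delta_u$. The delicate point—and the one place I expect genuine work—is that the naive bound $\lcm\le\prod$ only gives $z_u(n)\ll 2^{\omega(n)}n$, which is far too weak for the clean constant. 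The gain must come from the least common multiple actually collapsing: the local ranks $z_u(p)\mid p\pm1$ cannot simultaneously be close to $p$ and pairwise coprime, so their l.c.m. does not accumulate across the prime factors of $n$. Making this cancellation precise to land exactly at $z_u(n)\le 2n$ is the main obstacle; once it is in place, the proof of the lemma is complete.
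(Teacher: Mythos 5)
Your parts (1) and (2) are correct and essentially reconstruct what the paper outsources to a citation: the paper's entire proof of (1)--(3) is ``see Sanna, Lemma 2.1,'' so your self-contained derivation from the two black boxes (F1), (F2) is a legitimate, slightly more explicit route. In part (3), however, the exact prime-power law you quote, $z_u(p^j)=p^{\max(0,\,j-e)}z_u(p)$ with $e=v_p(u_{z_u(p)})$, is false at $p=2$: for the Fibonacci sequence one has $z_u(2)=3$, $e=1$, but $z_u(8)=6$, not $2^{2}\cdot 3=12$. Your conclusion for (3) survives because all you actually need are the two divisibilities $z_u(p)\mid z_u(p^j)$ and $z_u(p^j)\mid p^{\,j-1}z_u(p)$ (valid for every $p$, including $2$), which already pin down the prime-to-$p$ part of $z_u(p^j)$ as $z_u(p)$ when $p\nmid\Delta_u$ and force $z_u(p^j)\mid p^j$ when $p\mid\Delta_u$; you should argue from these rather than from the exact formula.

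The genuine gap is part (4). You correctly reduce it to the inequality $z_u(n)\le 2n$, but you then concede you cannot prove it, and the route you sketch --- hoping the least common multiple of the local ranks ``collapses'' because the $z_u(p)\mid p\pm 1$ cannot be simultaneously large and pairwise coprime --- is not a viable argument: the naive multiplicative bound gives only $z_u(n)\le n\prod_{p\mid n}(1+1/p)$, and by Mertens' theorem the product $\prod_{p\mid n}(1+1/p)$ is unbounded over $n$, so no soft counting or coprimality heuristic of the kind you describe can recover the constant $2$. The inequality $z_u(n)\le 2n$ is a genuine theorem about Lucas sequences (due to Sall\'e and Somer), and the paper's proof of (4) consists precisely of invoking it as known (``the last fact follows directly from the well-known inequality $z_u(n)\le 2n$''), after which $\l(n)=\lcm(n,z_u(n))\le n\,z_u(n)\le 2n^2$ is immediate --- exactly your first line. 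So your proposal is incomplete at the single point where real content is needed: either cite the Sall\'e--Somer bound as the paper does, or supply its proof, which is a nontrivial case analysis and not the cancellation argument you outline.
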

\begin{proof}
See \cite[Lemma 2.1]{sanna} for facts (1)-(3). The last fact follows directly from the well-known inequality $z_{u}(n) \leq 2 n$~(see, e.g., \cite{somer}).
\end{proof}

For each positive integer $\lambda$ and for each positive integer $n>1$ with prime factorisation $n=q_{1}^{h_{1}}\cdots q_{s}^{h_{s}}$, where $q_{1}<\cdots<q_{s}$ are prime numbers and $h_{1},\ldots,h_{s}$ are positive integers, define 
\begin{equation*}
    \rho_{\lambda}(n)\colonequals  \lambda! \sum_{\lambda_{1}\, +\,\cdots\,+\,\lambda_{s}\,=\,\lambda} \prod_{i=1}^{s}\frac{(h_{i}^{\lambda_{i}}-(h_{i}-1)^{\lambda_{i}})(\log q_{i})^{\lambda_{i}}}{\lambda_{i}!}
\end{equation*}
where sum is extended over all the $s$-tuples $(s\ge 1)$ of positive integers $(\lambda_{1},\ldots,\lambda_{s})$ such that $\lambda_{1}+\cdots+ \lambda_{s}=\lambda$. Note that $\rho_{\lambda}(n)=0$ when $s>\lambda$. For the sake of convenience, we set $\rho_{\lambda}(1)=0$.

The next lemma is a slightly improved upper bound on the arithmetic function $\rho_{\lambda}$ than the one proved in \cite[Lemma 2.4]{sanna}.

\begin{lemma}\label{lm:rho}
For all positive integers $\lambda$ and $n$, we have $\rho_{\lambda}(n)\le (\log n)^{\lambda}$.
\end{lemma}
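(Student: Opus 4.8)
The plan is to recognise $\rho_{\lambda}(n)$ as a subsum of the multinomial expansion of $(\log n)^{\lambda}$. For $n = q_1^{h_1}\cdots q_s^{h_s}$ we have $\log n = \sum_{i=1}^s h_i \log q_i$, so by the multinomial theorem
\[
(\log n)^{\lambda} = \sum_{\substack{\mu_1 + \cdots + \mu_s = \lambda \\ \mu_i \ge 0}} \frac{\lambda!}{\mu_1!\cdots\mu_s!} \prod_{i=1}^{s} (h_i \log q_i)^{\mu_i},
\]
where the sum runs over all \emph{nonnegative} integer tuples summing to $\lambda$ (I rename the summation indices from $\lambda_i$ to $\mu_i$ to avoid a clash with the fixed $\lambda$). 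Every term here is nonnegative, since $h_i \ge 1$ and $\log q_i > 0$.

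First I would restrict to the tuples with all $\mu_i \ge 1$; discarding the remaining terms — those having at least one $\mu_i = 0$ — can only decrease the sum, so
\[
(\log n)^{\lambda} \ge \sum_{\substack{\mu_1 + \cdots + \mu_s = \lambda \\ \mu_i \ge 1}} \frac{\lambda!}{\mu_1!\cdots\mu_s!} \prod_{i=1}^{s} h_i^{\mu_i}(\log q_i)^{\mu_i}.
\]
Then I would compare this surviving sum term-by-term against the definition
\[
\rho_{\lambda}(n) = \sum_{\substack{\mu_1 + \cdots + \mu_s = \lambda \\ \mu_i \ge 1}} \frac{\lambda!}{\mu_1!\cdots\mu_s!} \prod_{i=1}^{s} \bigl(h_i^{\mu_i} - (h_i-1)^{\mu_i}\bigr)(\log q_i)^{\mu_i}.
\]
Since $(h_i-1)^{\mu_i} \ge 0$, we have $h_i^{\mu_i} - (h_i-1)^{\mu_i} \le h_i^{\mu_i}$ for each $i$, so each factor in $\rho_{\lambda}(n)$ is at most the corresponding factor in the surviving sum. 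As the multinomial coefficients and the powers $(\log q_i)^{\mu_i}$ are identical and nonnegative, this inequality propagates through every product and through the sum, yielding $\rho_{\lambda}(n) \le (\log n)^{\lambda}$.

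Finally I would dispose of the degenerate case $n=1$ separately, where $\rho_{\lambda}(1) = 0 = (\log 1)^{\lambda}$ by convention. I do not expect a genuine obstacle here: the only point requiring care is matching the index set in the definition of $\rho_{\lambda}$ (tuples of \emph{positive} integers, whence $\rho_\lambda(n)=0$ once $s>\lambda$) with exactly the all-positive tuples that remain after dropping the zero-indexed terms from the multinomial expansion. It is precisely this restriction to all-positive tuples that lets us absorb the $-(h_i-1)^{\mu_i}$ correction and still stay below the full power $(\log n)^{\lambda}$.
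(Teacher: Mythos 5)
Your proof is correct and follows essentially the same route as the paper's: bound each factor $h_i^{\mu_i}-(h_i-1)^{\mu_i}$ by $h_i^{\mu_i}$ termwise, then recognise the resulting sum as a subsum of the multinomial expansion of $\bigl(\sum_i h_i\log q_i\bigr)^{\lambda}=(\log n)^{\lambda}$, all of whose terms are nonnegative. The paper compresses this into one display, but the ideas — and even the handling of the degenerate cases $n=1$ and $s>\lambda$ — coincide with yours.
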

\begin{proof}
Let $n=q_{1}^{h_{1}}\cdots q_{s}^{h_{s}}$ be the prime factorisation of $n$, with prime numbers $q_1<\cdots <q_{s}$ and positive exponents $h_{1},\ldots,h_{s}$. Assume also that $s\le \lambda,$ since otherwise $\rho_{\lambda}(n)=0.$
Therefore, 
\begin{equation*}
    \rho_{\lambda}(n)\le \sum_{\lambda_{1}\, +\,\cdots\,+\,\lambda_{s}\,=\,\lambda} \frac{\lambda!}{\lambda_{1}!\cdots \lambda_{s}!} \prod_{i=1}^{s}(h_{i}\log q_{i})^{\lambda_{i}}\le (\log n)^{\lambda},
\end{equation*}
by the multinomial theorem.
\end{proof}
\noindent For $x,y>0$ and positive integer $r,$ define 
\begin{equation*}
    \gamma(r)\colonequals \#\{n \in \mathbb{N} : (n,a_2)=1 \text{ and }\l(n)=r \}\quad\text{and}\quad \Phi(x,y)\colonequals \#\{n\le x : P(n)<y\}.
\end{equation*}
\begin{lemma}\label{lprop}
For all positive integers $r$, we have that $\gamma(r)\le\tau(r)$ where $\tau(r)$ denotes the number of divisors of $r.$
\end{lemma}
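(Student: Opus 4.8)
The plan is to show that every $n$ contributing to $\gamma(r)$ is forced to be a divisor of $r$, so that the set being counted embeds into the set of positive divisors of $r$, which has cardinality exactly $\tau(r)$. The single ingredient I would use is the definition $\l(n) = \lcm(n, z_u(n))$: since any integer divides a least common multiple in which it appears, we have $n \mid \l(n)$ for every $n$ with $(n,a_2)=1$. (The coprimality condition is what guarantees that $z_u(n)$, and hence $\l(n)$, is even defined, so it is harmless to impose it here.)

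Granting this observation, I would fix $r$ and let $n$ be any integer satisfying $(n,a_2)=1$ and $\l(n)=r$. Then $n \mid \l(n) = r$, so $n$ is a positive divisor of $r$. This yields the inclusion
\begin{equation*}
\{n \in \NN : (n,a_2)=1 \text{ and } \l(n)=r\} \subseteq \{d \in \NN : d \mid r\},
\end{equation*}
and comparing cardinalities gives $\gamma(r) \le \tau(r)$ at once.

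I do not expect any genuine obstacle: the estimate is an immediate consequence of the elementary fact $n \mid \l(n)$, and the coprimality constraint only shrinks the left-hand set, so it can never hurt the bound. The only point worth stating carefully is the well-definedness of $\l(n)$ under $(n,a_2)=1$, which is already recorded in the setup preceding Lemma~\ref{lem:basic}.
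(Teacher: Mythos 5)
Your proof is correct and is exactly the paper's argument: the paper's one-line proof likewise observes that $n \mid \l(n) = r$, so every $n$ counted by $\gamma(r)$ is a divisor of $r$, giving $\gamma(r)\le\tau(r)$. Your additional remark about well-definedness of $\l(n)$ under $(n,a_2)=1$ is fine but not needed beyond what the paper's setup already records.
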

\begin{proof}
As $n\mid \l(n)=r$, there are at most $\tau(r)$ possible values of $n$ such that $\l(n)=r$.
\end{proof}

\begin{lemma}\label{lem:Phi-bound}
 Let $C>0$ be a constant. For all sufficiently large $x$, we have $\Phi(x,C)\le (2\log x)^C$.
\end{lemma}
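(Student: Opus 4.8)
The plan is to recognize $\Phi(x,C)$ as counting the $C$-smooth numbers up to $x$, that is, those positive integers $n\le x$ all of whose prime factors are smaller than $C$. Since there are only finitely many primes below $C$, say $p_1<\cdots<p_k$ where $k$ denotes the number of primes less than $C$, every such $n$ has the form $n=p_1^{a_1}\cdots p_k^{a_k}$, and it suffices to count the admissible exponent vectors $(a_1,\dots,a_k)$.

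First I would note that each exponent is constrained by $p_i^{a_i}\le n\le x$, so $a_i\le \log x/\log p_i\le \log x/\log 2$. Hence the number of choices for each $a_i$ (including the possibility $a_i=0$) is at most $1+\log x/\log 2$, and therefore
\[
\Phi(x,C)\le\prod_{i=1}^{k}\left(1+\frac{\log x}{\log p_i}\right)\le\left(1+\frac{\log x}{\log 2}\right)^{k}.
\]

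Next I would observe that for all sufficiently large $x$ we have $1+\log x/\log 2\le 2\log x$, since $\log x/\log 2=\log_2 x<2\log x-1$ once $\log x$ exceeds a fixed absolute constant. Combining this with the trivial bound $k\le C$ (the primes below $C$ lie among $2,3,\dots,\lceil C\rceil-1$) and with the fact that $2\log x\ge 1$ for large $x$, so that raising to the larger exponent $C\ge k$ only increases the quantity, yields
\[
\Phi(x,C)\le(2\log x)^{k}\le(2\log x)^{C},
\]
as required.

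There is no real obstacle here: the argument is an elementary smooth-number count. The only points requiring a moment's care are the verification of the inequality $1+\log x/\log 2\le2\log x$ for large $x$ and the monotonicity step using $2\log x\ge1$, both of which hold for all $x$ beyond an absolute threshold, which is exactly the content of the ``for all sufficiently large $x$'' hypothesis.
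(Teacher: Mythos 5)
Your proof is correct and follows essentially the same route as the paper: write each $C$-smooth $n\le x$ as a product of primes below $C$ with exponents bounded by $\log x/\log 2$, count the exponent vectors, and absorb the constants into $(2\log x)^C$ for large $x$. The only difference is that you spell out the minor steps (the inequality $1+\log x/\log 2\le 2\log x$ and the passage from exponent $\pi(C)$ to $C$) that the paper leaves implicit.
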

\begin{proof}
Each of the positive integers $n$ counted by $\Phi(x,C)$ can be written as $n=p_{1}^{a_1}\cdots p_{\pi(C)}^{a_{\pi(C)}}$ where $p_{1},\ldots,p_{\pi(C)}$ are all prime numbers less than $C$, and $a_{1},\ldots,a_{\pi(C)}$ are non-negative integers. Clearly there are at most $1+\log x/\log 2 $ choices for each $a_{i}$. Therefore,
\begin{equation*}
    \Phi(x,C)\le \bigg(1+\frac{\log x}{\log 2}\bigg)^{C}\le (2\log x)^{C},
\end{equation*}
as desired.
\end{proof}

The next three lemmas are upper bounds for certain sums involving $\l$. 
\begin{lemma}\label{lm: prime}
We have 
\begin{equation*}
    \sum_{\substack{P(n) \,\geq\, y \\[1pt] (n,\, a_{2}) \,=\, 1}}\frac{1}{\l(n)} 
\ll_{u} \frac{1}{y^{1/3-\varepsilon}}
\end{equation*}
for all $\varepsilon\in (0,1/4]$ and $y\gg_{u,\lambda} 1$.
\end{lemma}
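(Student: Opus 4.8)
The plan is to isolate the largest prime factor of $n$ and reduce the whole sum to a one–variable sum over primes weighted by the rank of appearance, which can then be estimated by counting primes of a given rank. First I would discard the finitely many primes dividing $a_2\Delta_u$: since we may assume $y\gg_u 1$, every $n$ occurring in the sum has $p\colonequals P(n)\ge y$ with $p\nmid a_2\Delta_u$, so Lemma~\ref{lem:basic}-(\ref{ite:ell}) gives $\l(p^a)=p^a z_u(p)$, and as $z_u(p)$ divides one of $p-1,p+1$ we have $\gcd(p,z_u(p))=1$. Writing $n=p^a m$ with $a\ge 1$ and $p\nmid m$, Lemma~\ref{lem:basic}-(\ref{ite:lcm}) yields $\l(n)=\lcm(\l(p^a),\l(m))$. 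Using $\lcm(A,B)\ge AB/\gcd(A,B)$ and summing over every prime $p\ge y$ dividing $n$ (an overcount, hence an upper bound), I obtain
\begin{equation*}
\sum_{\substack{P(n)\ge y\\ (n,a_2)=1}}\frac{1}{\l(n)}\ \le\ \sum_{\substack{p\ge y\\ p\nmid a_2\Delta_u}}\ \sum_{a\ge 1}\frac{1}{\l(p^a)}\sum_{(m,\,a_2p)=1}\frac{\gcd(\l(p^a),\l(m))}{\l(m)}.
\end{equation*}

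The concrete heart is the resulting prime sum. Since $\sum_{a\ge1}1/\l(p^a)=\big(z_u(p)(p-1)\big)^{-1}$, once the inner $m$-sum is controlled (see below) everything reduces to estimating $\sum_{p\ge y}(z_u(p)\,p)^{-1}$. Here I would split according to the size of the rank. If $z_u(p)>\sqrt p$ the summand is $<p^{-3/2}$, and these terms contribute $\ll y^{-1/2}$. If instead $z_u(p)=d\le\sqrt p$, then $p\ge d^2$, and I group the primes by the common value $d$: each such $p$ divides $u_d$, and since the terms of a nondegenerate Lucas sequence grow at most exponentially, $|u_d|\le c^{\,d}$ for some $c=c(u)>1$, whence $\#\{p:z_u(p)=d\}\le\omega(u_d)\ll_u d$. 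As each such $p$ is $\ge\max(y,d^2)$, the small-rank primes contribute
\begin{equation*}
\sum_{d\ge 1}\frac1d\cdot d\cdot\frac{1}{\max(y,d^2)}=\sum_{d\ge 1}\frac{1}{\max(y,d^2)}\ \ll\ y^{-1/2}.
\end{equation*}
Thus $\sum_{p\ge y}(z_u(p)\,p)^{-1}\ll_u y^{-1/2}$, which, together with the at most $p^{\varepsilon}$ loss coming from the $m$-sum, comfortably yields the claimed bound $\ll_u y^{-1/3+\varepsilon}$ (in fact something stronger).

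The hard part will be the uniform bound on the inner sum $\sum_{(m,a_2p)=1}\gcd(\l(p^a),\l(m))/\l(m)$. Writing $\gcd(\l(p^a),\l(m))=\gcd(p^a,\l(m))\,\gcd(z_u(p),\l(m))$, which is legitimate because $\gcd(p,z_u(p))=1$, and expanding each factor via $\gcd(A,B)=\sum_{e\mid\gcd(A,B)}\varphi(e)$, this becomes a sum over divisors of $z_u(p)$ (and powers of $p$) of terms $\varphi(e)\sum_{m:\,e\mid\l(m)}1/\l(m)$. Bounding $\sum_{m:\,e\mid\l(m)}1/\l(m)$ is exactly where the sparsity of the level sets of $\l$ must be exploited: I would use $\gamma(r)\le\tau(r)$ from Lemma~\ref{lprop} together with the fact that $p\nmid m$ forces any large common factor of $\l(p^a)$ and $\l(m)$ to arise from $z_u(m)$, which is rare. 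This is the genuine obstacle, and it is where the loss of a small power of $p$ (and hence the weaker exponent $1/3$ rather than $1/2$) is absorbed; granting it, the two displayed estimates above finish the proof.
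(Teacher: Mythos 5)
Your opening reduction and your treatment of the prime sum are correct: writing $n=p^am$ with $p=P(n)\ge y$, using $\l(p^a)=p^az_u(p)$ and $\lcm(A,B)\ge AB/\gcd(A,B)$, and bounding $\sum_{p\ge y}1/(z_u(p)\,p)\ll_u y^{-1/2}$ by grouping primes according to $d=z_u(p)$ and using $\#\{p : z_u(p)=d\}\le\omega(u_d)\ll_u d$ are all sound steps. But the step you defer is not a loose end that merely absorbs ``a small power of $p$''; it is the entire content of the lemma. What you have actually proved is only the prime-power piece $\sum_{p\ge y}\sum_{a\ge 1}1/\l(p^a)\ll_u y^{-1/2}$, while the general case is exactly equivalent to controlling your inner sum $\sum_{(m,\,a_2p)=1}\gcd(\l(p^a),\l(m))/\l(m)$ uniformly in $p$ and $a$ --- and nothing in the proposal shows this sum is even \emph{finite}.

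Moreover, the tools you point to for closing it provably cannot suffice. After expanding $\gcd(A,B)=\sum_{e\mid\gcd(A,B)}\varphi(e)$, the $e$-term requires bounding $\sum_{m:\,e\mid\l(m)}1/\l(m)$, and the only sparsity input you invoke, $\gamma(r)\le\tau(r)$ (Lemma \ref{lprop}), gives the majorant $\sum_{r:\,e\mid r}\tau(r)/r$, which diverges; already for $e=1$ the finiteness of $\sum_{(m,a_2)=1}1/\l(m)$ is out of reach of $\gamma\le\tau$ combined with $\l(m)\ge m$, since both yield divergent series. The obstruction is quantitative: setting $N(t)=\#\{m : (m,a_2)=1,\ \l(m)\le t\}$, convergence of $\sum_m 1/\l(m)$ needs a genuine saving over the trivial bound $N(t)\le t$ (which follows from $m\mid\l(m)$), e.g.\ $N(t)\ll t^{1-\delta}$, whereas Lemma \ref{lem:basic}-(\ref{ite:basic:upper}) shows $N(t)\gg_u\sqrt{t}$, so the needed estimate, though true, is delicate: it amounts to showing that integers $m$ with $z_u(m)/\gcd(m,z_u(m))$ small are rare, i.e.\ rank-of-appearance counting in the spirit of \cite{gonzalez}. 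This is precisely the input the paper relies on: it offers no proof of this lemma at all, but cites \cite[Lemma 2.5]{sanna}, where the argument rests on exactly this kind of counting. So your proposal correctly isolates where the difficulty lies, but it resolves only the easy part of the lemma and reduces the rest to an unproven claim that is as deep as the lemma itself.
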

\begin{proof}
See \cite[Lemma 2.5]{sanna}.
\end{proof}

\begin{lemma}\label{lem:bound-ell}
We have 
$$\sum_{\substack{n \,\geq\, y \\[1pt] (n,\, a_{2}) \,=\, 1}} \frac{\rho_{\lambda}(n)}{\l(n)} \ll_{u, \lambda} \frac{1}{y^{1/(1 + 3\lambda) - \e}}$$
for all positive integers $\lambda,$ $\e \in (0, 1/5],$ and $y\gg_{u,\lambda, \e} 1.$
\end{lemma}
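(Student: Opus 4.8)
The plan is to reduce to a logarithmically weighted sum of $1/\l(n)$, and then to separate the ranges according to the largest prime factor $P(n)$: the range where $n$ has a large prime factor is handled by Lemma~\ref{lm: prime}, while the complementary smooth range — where that lemma is silent — is where the real work lies.

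First I would apply Lemma~\ref{lm:rho} to bound $\rho_{\lambda}(n)\le(\log n)^{\lambda}$, while retaining the information that $\rho_{\lambda}(n)=0$ unless $\omega(n)\le\lambda$. Thus it suffices to estimate $\sum_{n\ge y,\,(n,a_2)=1,\,\omega(n)\le\lambda}(\log n)^{\lambda}/\l(n)$. The condition $\omega(n)\le\lambda$ is essential: writing $n=q_1^{h_1}\cdots q_s^{h_s}$ with $s\le\lambda$ and using Lemma~\ref{lem:basic}, one gets $\l(n)\ge\max_i\l(q_i^{h_i})\ge\max_i q_i^{h_i}\ge n^{1/s}\ge n^{1/\lambda}$. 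This crude lower bound lets me convert the logarithmic weight into an arbitrarily small power of $n$ (absorbed into the $\e$), since $(\log n)^{\lambda}\ll_{\lambda,\e}n^{\e}$, and it is the only device needed to keep the weight from interfering with convergence.

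Next I would fix a cut-off $z=y^{\beta}$ and split the sum at $P(n)=z$. On the range $P(n)\ge z$ I would feed Lemma~\ref{lm: prime} into a dyadic decomposition $n\in[N,2N]$: bounding $(\log n)^{\lambda}\ll_{\lambda}N^{\e}$ and using $\l(n)\ge N^{1/\lambda}$ to guarantee decay in $N$, Lemma~\ref{lm: prime} controls each dyadic block and the blocks sum to a bound of the form $z^{-1/3+\e}=y^{-\beta/3+\e}$. On the range $P(n)<z$ — the $z$-smooth integers with at most $\lambda$ distinct prime factors — I would first try the elementary estimate coming from $\l(n)\ge n^{1/\lambda}$ together with the fact that such integers are determined by a choice of at most $\lambda$ primes below $z$ and their exponents, so that their counting function is small (in the spirit of Lemmas~\ref{lprop} and~\ref{lem:Phi-bound}); partial summation against $n^{-1/\lambda}$, convergent because $n\ge y$, then gives a bound of the shape $z^{\lambda}\,y^{-1/\lambda+\e}$.

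The main obstacle is precisely this smooth range. Balancing $z^{\lambda}y^{-1/\lambda}$ against $y^{-\beta/3}$ with $z=y^{\beta}$ does recover the correct shape of the answer only for $\lambda=1$; for $\lambda\ge2$ the crude bound $\l(n)\ge n^{1/\lambda}$ is too lossy to reach the exponent $1/(1+3\lambda)$, because integers with few prime factors and no large prime factor are too numerous. To close this gap one must replace $\l(n)\ge n^{1/\lambda}$ by the genuine size of $\l(n)$, exploiting the factorisation $\l(n)=\lcm_i\l(q_i^{h_i})$ with $\l(q_i^{h_i})=q_i^{h_i}z_u(q_i)$ for $q_i\nmid\Delta_u$ (Lemma~\ref{lem:basic}) and the fact that the ranks of apparition $z_u(q_i)$ force $\l(n)$ to be typically much larger than $n$; this is the same arithmetic input that powers Lemma~\ref{lm: prime}, now required for smooth $n$ as well. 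Once the smooth range is estimated sharply, optimising $\beta$ makes the two contributions balance and yields the claimed bound $y^{-1/(1+3\lambda)+\e}$; carrying out this sharp smooth estimate, and verifying that the optimal cut-off produces exactly the exponent $1/(1+3\lambda)$, is the delicate heart of the argument and is what makes the exponent depend on $\lambda$.
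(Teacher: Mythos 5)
The first thing to note is that the paper does not prove this lemma at all: its entire proof is the citation ``See \cite[Lemma 2.6]{sanna}.'' So the comparison is with Sanna's argument, whose skeleton --- restrict to $\omega(n)\le\lambda$ via $\rho_{\lambda}$, bound $\rho_{\lambda}(n)\le(\log n)^{\lambda}$, split according to the size of $P(n)$, handle the rough range with Lemma~\ref{lm: prime} and the smooth range by counting --- you have reconstructed correctly. But your proposal has a genuine gap (which you acknowledge), and your diagnosis of what is needed to close it is wrong. The missing ingredient is not ``the genuine size of $\l(n)$'' coming from ranks of apparition on the smooth range; it is the trivial observation that $n\mid\l(n)$, since by definition $\l(n)=\lcm(n,z_u(n))$, so $\l(n)\ge n$. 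You work instead with the far weaker $\l(n)\ge n^{1/\lambda}$, and that single loss is exactly why your balance fails for $\lambda\ge2$: the smooth integers are not ``too numerous'' (their count is polylogarithmic for fixed cutoff), your pointwise bound on $1/\l(n)$ is just too weak. With $\l(n)\ge n$ your own smooth-range plan closes elementarily: the number of $n\in[N,2N)$ with $\omega(n)\le\lambda$ and $P(n)\le N^{\beta}$ is $\ll_{\lambda} N^{\beta\lambda}(\log N)^{\lambda}$ (choose at most $\lambda$ primes below $N^{\beta}$ and their exponents, as in Lemma~\ref{lem:Phi-bound}), so such a block contributes $\ll N^{\beta\lambda-1}(\log N)^{2\lambda}$, and summing over $N=2^{k}y$ gives $\ll y^{\beta\lambda-1+\e}$ provided $\beta\lambda<1$. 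No arithmetic information about $z_u$ beyond Lemma~\ref{lm: prime} is needed there --- note the paper itself freely uses $\l(n)\ge n$ elsewhere (e.g.\ in Section 4, where $\varphi(\l(n))\gg n/\log\log n$).

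The second gap is in your rough range. With a fixed cutoff $z=y^{\beta}$, applying Lemma~\ref{lm: prime} to each dyadic block $[N,2N)$ yields $z^{-1/3+\e}$ uniformly in $N$, with no decay in $N$, so the blocks do not ``sum to $z^{-1/3+\e}$''; and you cannot restore decay by also invoking $\l(n)\ge N^{1/\lambda}$, because that bound and Lemma~\ref{lm: prime} spend the same resource $1/\l(n)$ --- there is no legitimate way to use both multiplicatively (any H\"older-type interpolation reproduces one bound or the other, never their product). The repair is to let the threshold grow with the block: split at $P(n)\gtrless n^{\beta}$. On the rough side, Lemma~\ref{lm: prime} applied on $[N,2N)$ with parameter $\asymp N^{\beta}$ gives $\ll N^{\e}\cdot N^{-\beta(1/3-\e)}$, which is summable over $N=2^{k}y$ and totals $\ll y^{-\beta/3+\e}$. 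Balancing $y^{-\beta/3}$ against the smooth contribution $y^{\beta\lambda-1}$ forces $\beta=3/(1+3\lambda)$ and produces precisely the exponent $1/(1+3\lambda)$ of the statement. With these two corrections --- the trivial bound $\l(n)\ge n$ and the $n$-dependent smoothness threshold --- your outline becomes a complete proof, essentially the one in \cite{sanna}.
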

\begin{proof}
See \cite[Lemma 2.6]{sanna}.
\end{proof}

\begin{lemma}\label{lem:bound-phi-ell}
We have
\begin{equation*}
\sum_{\substack{n \,\geq\, y \\[1pt] (n,\, a_{2}) \,=\, 1}}\frac{\rho_{\lambda}(n)}{\varphi\big(\ell_{u}(n)\big)} 
\ll_{u,\lambda} \frac{\log \log y}{y^{1/(1+3\lambda)-\varepsilon}} ,
\end{equation*} 
for all positive integers $\lambda,\varepsilon \in (0,1/5],$ and  $y\gg_{u,\lambda,\e} 1$.
\end{lemma}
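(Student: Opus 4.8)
The plan is to reduce this to Lemma~\ref{lem:bound-ell} by controlling the discrepancy between $\varphi(\l(n))$ and $\l(n)$. The starting point is the classical bound $m/\varphi(m)\ll\log\log m$ for $m\ge 3$ (a consequence of Mertens' theorem). Applying it with $m=\l(n)$ and invoking $\l(n)\le 2n^{2}$ from Lemma~\ref{lem:basic}-(\ref{ite:basic:upper}), which yields $\log\log\l(n)\ll\log\log n$, I obtain for $n\ge y$ large (so that $\l(n)\ge n\ge 3$)
$$\frac{1}{\varphi(\l(n))}=\frac{1}{\l(n)}\cdot\frac{\l(n)}{\varphi(\l(n))}\ll\frac{\log\log n}{\l(n)}.$$
Hence it suffices to bound $\sum_{n\ge y,\,(n,a_2)=1}\rho_{\lambda}(n)\log\log n/\l(n)$.

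The difficulty is that $\log\log n$ grows with $n$, so one cannot simply extract a factor $\log\log y$ and quote Lemma~\ref{lem:bound-ell}; bounding $\log\log n$ by its value at the far end of the tail would cost an unbounded factor. I would resolve this with a dyadic decomposition of the range $n\ge y$ into blocks $2^{k}y\le n<2^{k+1}y$ for $k\ge 0$. On the $k$-th block, for $y$ large one has $\log\log n\le\log\log(2^{k+1}y)\le\log\log y+\log(k+2)$, since $(k+1)\log 2+\log y\le(k+2)\log y$ once $\log y\ge 1$. Meanwhile Lemma~\ref{lem:bound-ell} bounds the partial sum of $\rho_{\lambda}(n)/\l(n)$ over that block by the full tail sum starting at $2^{k}y$, namely $\ll_{u,\lambda}(2^{k}y)^{-\delta}$ with $\delta=1/(1+3\lambda)-\varepsilon$.

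Multiplying these estimates and summing over $k$, the block contributions are at most
$$\ll_{u,\lambda}\; y^{-\delta}\sum_{k\ge 0}\bigl(\log\log y+\log(k+2)\bigr)\,2^{-k\delta}.$$
The geometric factor $2^{-k\delta}$ forces both $\sum_{k\ge 0}2^{-k\delta}$ and $\sum_{k\ge 0}\log(k+2)\,2^{-k\delta}$ to converge to constants depending only on $\delta$, hence only on $\lambda$ and $\varepsilon$. Thus the whole sum is $\ll_{u,\lambda}y^{-\delta}\log\log y$, which is exactly the claimed bound, with the decay exponent $\delta=1/(1+3\lambda)-\varepsilon$ inherited verbatim from Lemma~\ref{lem:bound-ell}. (Equivalently, one can run Abel summation of $\rho_\lambda(n)/\l(n)$ against the weight $\log\log n$, using the tail estimate of Lemma~\ref{lem:bound-ell}; there the boundary term $y^{-\delta}\log\log y$ is the main contribution, and the increments $\log\log n-\log\log(n-1)\asymp 1/(n\log n)$ integrated against $n^{-\delta}$ give only a lower-order $y^{-\delta}/\log y$.)

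The sole genuine obstacle is this control of the slowly growing factor $\log\log n$: the point is that the geometric decay furnished by Lemma~\ref{lem:bound-ell} is exactly strong enough to absorb $\log\log n$ while preserving the exponent $\delta$, and the dyadic splitting is the device that localizes $\log\log n$ at the scale $y$. Every other ingredient—the elementary bound $m/\varphi(m)\ll\log\log m$ and the inequality $\l(n)\le 2n^{2}$—is routine.
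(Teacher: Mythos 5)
Your proof is correct and essentially the same as the paper's: both reduce to the weighted sum $\sum_{n\ge y}\rho_{\lambda}(n)\log\log n/\l(n)$ via $\varphi(m)\gg m/\log\log m$ together with $\l(n)\le 2n^{2}$, and then invoke the tail bound of Lemma~\ref{lem:bound-ell}. The only difference is bookkeeping: the paper controls the $\log\log n$ weight by the partial-summation identity $S(y)\log\log y+\int_{y}^{\infty}S(t)/(t\log t)\,\mathrm{d}t$, which is precisely the Abel-summation variant you mention parenthetically, rather than by dyadic blocks; the two devices are interchangeable here.
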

\begin{proof}
From Lemma \ref{lem:bound-ell}, it follows that
\begin{equation*}
S(t) \colonequals \sum_{\substack{n \,\geq\, t \\[1pt] (n,\, a_{2}) \,=\, 1}}\frac{\rho_{\lambda}(n)}{\ell_{u}(n)} 
\ll_{u,\lambda} \frac{1}{t^{1/(1+3\lambda)-\varepsilon}}
\end{equation*}
for all $y \gg_{u,\lambda,\varepsilon} 1$.
By partial summation,
\begin{align*}\label{equi}
\sum_{\substack{n \,\geq\, y \\[1pt] (n,\, a_{2}) \,=\, 1}}\frac{\rho_{\lambda}(n)\log \log n}{\ell_{u}(n)}  &= S(y)\log \log y +\int_{y}^{+\infty} \frac{S(t)}{t \log t}\,\mathrm{d} t \\
&\ll_{u,\lambda} \frac{\log \log y}{y^{1/(1+3\lambda)-\varepsilon}} + \int_y^{+\infty} \frac{1}{t^{1+1/(1+3\lambda)-\varepsilon} \log t}\,\mathrm{d} t \\
&\ll \frac{\log \log y}{y^{1/(1+3\lambda)-\varepsilon}}.
\end{align*}
Since $\varphi(n)\gg n/\!\log{\log{n}}$ (see, e.g., \cite[Chapter I.5, Theorem 4]{tenebaum}) and $\l(n)\leq 2n^2$ (see Lemma \ref{lem:basic}-(\ref{ite:basic:upper})) for all positive integers $n$, it follows that
\begin{equation*}
\sum_{\substack{n \,\geq\, y \\[1pt] (n,\, a_{2}) \,=\, 1}}\frac{\rho_{\lambda}(n)}{\varphi\big(\ell_{u}(n)\big)}  \ll \sum_{\substack{n \,\geq\, y \\[1pt] (n,\, a_{2}) \,=\, 1}}\frac{\rho_{\lambda}(n) \log \log n}{\ell_{u}(n)} \ll  \frac{\log \log y}{y^{1/(1+3\lambda)-\varepsilon}}.
\end{equation*}
\end{proof}

\vspace{0.1cm}

\begin{lemma}[Siegel-Walfisz under GRH]\label{thm:Siegel-Walfisz}
Under the Generalized Riemann Hypothesis (GRH), we have 
\begin{equation*}
\Delta(x; b, a) \ll x^{1/2}(\log x) ,
\end{equation*}
for all $x \gg 1$.
\end{lemma}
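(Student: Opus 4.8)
This is the classical conditional form of the prime number theorem for arithmetic progressions, and the plan is to deduce it from the explicit formula under GRH; one may alternatively quote it verbatim from a standard reference such as Davenport or Montgomery--Vaughan. Throughout I assume $(a,b)=1$, which is the only case arising in our applications and the only case in which the main term is meaningful. The first step is to pass to the weighted counting functions
\begin{equation*}
\psi(x;b,a)\colonequals\sum_{\substack{n\le x\\ n\equiv a\,(\mathrm{mod}\,b)}}\Lambda(n)\qquad\text{and}\qquad \psi(x,\chi)\colonequals\sum_{n\le x}\chi(n)\Lambda(n),
\end{equation*}
and to use the orthogonality of Dirichlet characters modulo $b$ to write $\psi(x;b,a)=\varphi(b)^{-1}\sum_{\chi}\bar\chi(a)\psi(x,\chi)$, reducing the problem to a uniform estimate for each twisted sum $\psi(x,\chi)$.

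Next I would treat the characters separately. The principal character contributes $\psi(x,\chi_0)=\psi(x)+O(\log b\log x)$, the error absorbing the primes dividing $b$. For a non-principal $\chi$, after replacing it by the primitive character that induces it (which perturbs $\psi(x,\chi)$ by $O(\log b\log x)$), I would apply the explicit formula
\begin{equation*}
\psi(x,\chi)=-\sum_{|\gamma|\le T}\frac{x^{\rho}}{\rho}+O\!\left(\frac{x(\log bx)^2}{T}+\log bx\right),
\end{equation*}
where $\rho=\beta+i\gamma$ runs over the non-trivial zeros of $L(s,\chi)$. Here GRH enters decisively: it forces $\beta=\tfrac12$, so that $|x^{\rho}/\rho|=x^{1/2}/|\rho|$, and together with the zero count $\#\{\rho:0<\gamma\le T\}\ll T\log(bT)$ the sum over zeros is $\ll x^{1/2}(\log bT)^2$. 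Choosing $T=x^{1/2}$ then gives $\psi(x,\chi)\ll x^{1/2}(\log bx)^2$ uniformly in the non-principal $\chi$.

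Substituting back into the orthogonality identity, the principal character yields the main term while the remaining $\varphi(b)-1$ characters each contribute $O(x^{1/2}(\log bx)^2)$; dividing by $\varphi(b)$ absorbs their number, so that
\begin{equation*}
\psi(x;b,a)=\frac{\psi(x)}{\varphi(b)}+O\!\left(x^{1/2}(\log bx)^2\right)=\frac{x}{\varphi(b)}+O\!\left(x^{1/2}(\log bx)^2\right),
\end{equation*}
the last equality using $\psi(x)=x+O(x^{1/2}(\log x)^2)$, which is GRH for $\zeta$. Finally I would pass from $\psi$ to $\pi$: restricting to $b\le x$ so that $\log bx\ll\log x$, discarding prime powers to get the same estimate for $\vartheta(x;b,a)$, and applying partial summation. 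The crucial bookkeeping point is that dividing the leading term by $\log x$ lowers the logarithmic power from two to one, producing $\pi(x;b,a)=\operatorname{li}(x)/\varphi(b)+O(x^{1/2}\log x)$; comparing this with von Koch's bound $\pi(x)=\operatorname{li}(x)+O(x^{1/2}\log x)$ and subtracting $\varphi(b)^{-1}\pi(x)$ yields $\Delta(x;b,a)\ll x^{1/2}\log x$. The range $b>x$ is immediate, since then both $\pi(x;b,a)$ and $\pi(x)/\varphi(b)$ are $O(x^{1/2})$.

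The main obstacle is securing the explicit formula and the zero-counting bound $N(T,\chi)\ll T\log(bT)$ with constants that are genuinely uniform in the modulus $b$, since the asserted implied constant must be absolute; the reduction of imprimitive characters to their primitive inducers and the $\psi$-to-$\pi$ conversion are routine once this uniformity is in hand.
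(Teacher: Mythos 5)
Your proposal is correct: the paper's own ``proof'' of this lemma is nothing more than a citation to Montgomery--Vaughan, Corollary 13.8, and your character-orthogonality/explicit-formula argument (with $T = x^{1/2}$ and the drop from $(\log x)^2$ to $\log x$ in the $\psi$-to-$\pi$ conversion) is precisely the standard proof of that cited corollary, a route you yourself note could be replaced by the verbatim citation. Your caveat that $(a,b)=1$ is required is also well taken: the paper states the lemma without this hypothesis, under which it would be false (e.g.\ $b=2$, $a=0$), but it is only ever applied with $a=1$, so no harm results.
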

\begin{proof}
See~\cite[Corollary 13.8]{mont}.
\end{proof}

\begin{hypothesis}[Montgomery's conjecture, \protect{\cite[Conjecture 1(b)]{lin}}]\label{hypo:mont}
For any $\varepsilon>0$, there exists a constant $C_\varepsilon$ such that for all $b\le x$ we have
\begin{equation*}
\Delta(x; b, a) \le C_\varepsilon\, {x^{1/2+\varepsilon}}\,{b^{1/2}} ,
\end{equation*}
for all $x \gg 1$.
\end{hypothesis}

\begin{lemma}[Bombieri-Vinogradov]\label{thm:bomb}
For any $A > 0$, there exists $B=B(A)>0$ such that 
\begin{equation*}
\sum_{d\leq \sqrt{x}/(\log x)^{B}} |\Delta(x; d, a)| \ll \frac{x}{(\log x)^{A}} ,
\end{equation*}
for all $x \gg_A 1$.
\end{lemma}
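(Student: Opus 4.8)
The statement is the Bombieri--Vinogradov theorem, and the plan is to run the classical large-sieve proof. First I would pass from $\pi$ to the Chebyshev function $\psi(y;d,a) = \sum_{n\le y,\ n\equiv a \bmod d}\Lambda(n)$ by partial summation: writing $E(y;d,a)\colonequals \psi(y;d,a) - y/\varphi(d)$ for $(a,d)=1$, it suffices to prove $\sum_{d\le D}\max_{y\le x}|E(y;d,a)| \ll x/(\log x)^{A}$ with $D=\sqrt{x}/(\log x)^{B}$, the deduction of the $\pi$-version costing only powers of $\log x$. Expanding the congruence with Dirichlet characters gives, for $(a,d)=1$,
\begin{equation*}
E(y;d,a) = \frac{1}{\varphi(d)}\sum_{\chi\ne\chi_0}\bar\chi(a)\,\psi(y,\chi) + O\big((\log x)^2\big),
\end{equation*}
where $\psi(y,\chi)=\sum_{n\le y}\Lambda(n)\chi(n)$ and the error absorbs the prime divisors of $d$ in the principal-character term. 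Reducing each non-principal $\chi$ to the primitive character $\chi^{*}$ that induces it and summing the weights $1/\varphi(d)$ over multiples of the conductor (using $\sum_{d\le D,\ q\mid d}1/\varphi(d)\ll (\log x)/\varphi(q)$), I reduce the whole problem to bounding $\log x\sum_{q\le D}\varphi(q)^{-1}\sum_{\chi\bmod q}^{*}\max_{y\le x}|\psi(y,\chi)|$, where the inner sum is over primitive characters modulo $q$.

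Next I would insert Vaughan's identity to decompose each $\psi(y,\chi)$ into $O(\log x)$ many Type~I (linear) sums $\sum_{m\le M}a_m\sum_{n}\chi(mn)$ with smooth inner sum, and Type~II (bilinear) sums $\sum_{m\sim M}\sum_{n\sim N}a_m b_n\chi(mn)$, where the coefficients are bounded by divisor functions and the Vaughan cutoffs are chosen so that every dyadic range satisfies $M,N\le\sqrt{x}$ and $MN\asymp x$. The arithmetic heart is the multiplicative large-sieve inequality
\begin{equation*}
\sum_{q\le Q}\frac{q}{\varphi(q)}\sum_{\chi\bmod q}^{*}\Big|\sum_{n\le N}c_n\chi(n)\Big|^{2}\ll (Q^{2}+N)\sum_{n\le N}|c_n|^{2}.
\end{equation*}
For the Type~II sums I would apply Cauchy--Schwarz in the character variable and feed both factors into the large sieve; for the Type~I sums I would evaluate the smooth inner sum directly and apply the large sieve to the outer coefficients. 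Packaging the dyadic pieces produces the Davenport-type estimate $\sum_{q\sim Q}\frac{q}{\varphi(q)}\sum_{\chi}^{*}\max_{y\le x}|\psi(y,\chi)| \ll (x+x^{5/6}Q+x^{1/2}Q^{2})(\log x)^{4}$.

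To finish I would insert the weight $1/\varphi(q)\asymp Q^{-1}\cdot q/\varphi(q)$ and sum over dyadic $Q\le D$, giving contributions $x/Q$, $x^{5/6}$, and $x^{1/2}Q$ (times $(\log x)^{O(1)}$). The term $x^{1/2}Q$ summed up to $D$ yields $x^{1/2}D(\log x)^{O(1)} = x(\log x)^{O(1)-B}$, which is the binding constraint: it is $\ll x/(\log x)^{A}$ precisely because $D\le\sqrt{x}/(\log x)^{B}$ with $B$ chosen large in terms of $A$. The $x^{5/6}$ term is trivially negligible, while the $x/Q$ term would blow up for small $Q$; I would therefore remove the moduli $q\le(\log x)^{E}$ beforehand and handle them by the classical Siegel--Walfisz theorem, which gives $\max_{y\le x}|\psi(y,\chi)|\ll x\exp(-c\sqrt{\log x})$ for conductors up to any fixed power of $\log x$, contributing $\ll x\exp(-c'\sqrt{\log x})$. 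The remaining $x/Q$ with $Q>(\log x)^{E}$ sums to $\ll x/(\log x)^{E}$, which is admissible once $E$ is large enough. The main obstacle is the bilinear (Type~II) estimate: arranging the Vaughan ranges so that both variables stay below $\sqrt{x}$ and extracting the sharp $Q^{2}+N$ large-sieve bound is exactly what allows $D$ to reach $\sqrt{x}$, and the unconditional savings for small conductors rests entirely on Siegel--Walfisz, the one ingredient that is not purely combinatorial.
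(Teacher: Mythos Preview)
Your outline is a correct sketch of the classical large-sieve proof of the Bombieri--Vinogradov theorem (character expansion, reduction to primitive characters, Vaughan's identity to produce Type~I/II sums, the multiplicative large sieve for the bilinear pieces, and Siegel--Walfisz to handle small conductors). The paper, however, does not prove this lemma at all: it is stated as a well-known result and used as a black box, the only work done being the deduction of the weighted variant in the subsequent lemma. So your route is not ``different'' so much as simply present where the paper's is absent; what you have written is the standard textbook argument (as in Davenport or Iwaniec--Kowalski), and it proves a slightly stronger statement than the one recorded here, since you carry the supremum over $y\le x$ throughout while the paper only needs the pointwise error $\Delta(x;d,a)$.
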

For our applications we need the following weighted version of the Bombieri-Vinogradov theorem.
\begin{lemma}\label{thm:bomb-modify}
For any $A > 0$, there exists $B=B(A)>0$ such that 
\begin{equation*}
\sum_{d\leq \sqrt{x}/(\log x)^{B}} \gamma(d)  |\Delta(x; d, a)| \ll \frac{x}{(\log x)^{A}} ,
\end{equation*}
for all $x \gg_A 1$.
\end{lemma}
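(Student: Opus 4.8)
The plan is to deduce the weighted estimate from the unweighted Bombieri--Vinogradov theorem (Lemma~\ref{thm:bomb}) by absorbing the arithmetic weight through Cauchy--Schwarz, using the pointwise bound $\gamma(d)\le\tau(d)$ from Lemma~\ref{lprop}. Write $D\colonequals\sqrt{x}/(\log x)^{B}$. Since $\gamma(d)\le\tau(d)$, it suffices to bound $\sum_{d\le D}\tau(d)\,|\Delta(x;d,a)|$. Splitting the summand as $\tau(d)|\Delta(x;d,a)|^{1/2}\cdot|\Delta(x;d,a)|^{1/2}$ and applying Cauchy--Schwarz, I would dominate this by
\begin{equation*}
\Bigg(\sum_{d\le D}\tau(d)^{2}\,|\Delta(x;d,a)|\Bigg)^{1/2}\Bigg(\sum_{d\le D}|\Delta(x;d,a)|\Bigg)^{1/2}.
\end{equation*}

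For the first factor I would insert the trivial bound $|\Delta(x;d,a)|\ll x/\varphi(d)$, which holds in the relevant range $d\le D\le\sqrt{x}$ because $\pi(x;d,a)\le x/d+1\ll x/d\le x/\varphi(d)$ and $\pi(x)/\varphi(d)\le x/\varphi(d)$. This reduces the first factor to $x\sum_{d\le D}\tau(d)^{2}/\varphi(d)$. A standard divisor-sum estimate, combining $\sum_{d\le D}\tau(d)^{2}/d\ll(\log D)^{4}$ with $d/\varphi(d)\ll\log\log d$, gives $\sum_{d\le D}\tau(d)^{2}/\varphi(d)\ll(\log x)^{5}$, so the first factor is $\ll\big(x(\log x)^{5}\big)^{1/2}$.

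For the second factor I would invoke Lemma~\ref{thm:bomb} with an inflated exponent $A'$ to be fixed, obtaining $\sum_{d\le D}|\Delta(x;d,a)|\ll x/(\log x)^{A'}$ as soon as $B\ge B(A')$. Multiplying the two factors then yields
\begin{equation*}
\sum_{d\le D}\tau(d)\,|\Delta(x;d,a)|\ll x\,(\log x)^{(5-A')/2},
\end{equation*}
and choosing $A'=2A+5$ together with $B=B(2A+5)$ produces the claimed bound $x/(\log x)^{A}$. The only real subtlety, and the point to get right, is the bookkeeping: one must apply Bombieri--Vinogradov with the enlarged exponent $A'=2A+5$ rather than $A$, so that the Cauchy--Schwarz loss of $(\log x)^{5/2}$ introduced by the divisor weight is swallowed, and correspondingly fix $B=B(2A+5)$ at the outset; the range $d\le\sqrt{x}/(\log x)^{B}$ then lies within the hypothesis of Lemma~\ref{thm:bomb}. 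No genuinely hard analytic input beyond the unweighted theorem is needed, since the weight $\gamma(d)$ never exceeds $\tau(d)$.
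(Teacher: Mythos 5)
Your proof is correct and takes essentially the same route as the paper: bound $\gamma(d)$ by $\tau(d)$ (Lemma~\ref{lprop}), apply Cauchy--Schwarz to split off the divisor weight, handle the weighted factor with the trivial bound on $\Delta$ together with the average order of $\tau(d)^2$, and handle the unweighted factor with Lemma~\ref{thm:bomb} at an inflated exponent chosen in advance. The only cosmetic difference is that the paper inserts the trivial bound $|\Delta(x;d,a)|\ll x/d$ \emph{before} applying Cauchy--Schwarz and takes $A'=2A+4$, whereas you insert $|\Delta(x;d,a)|\ll x/\varphi(d)$ afterwards and compensate the extra $\log\log$ loss by taking $A'=2A+5$.
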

\begin{proof}
Replace $A$ with $2A+4$ in Lemma~\ref{thm:bomb} to get a constant $B = B(2A+4).$ Since $\pi(x;d,a)\ll x/d$ for $d\le x$, we have that 
$$\Delta(x; b, a)\ll \frac{x}{d}.$$
Therefore, by putting $z=x^{1/2}/(\log x)^{B}$, we get
\begin{align*}
\sum_{d\leq z} \gamma(d) |\Delta(x; d, a)| &\ll  \sum_{d\leq z} \gamma(d) \left(\frac{x}{d}\right)^{1/2}  |\Delta(x; d, a)|^{1/2}\\
& \ll x^{1/2} \left(\sum_{d\le z}\frac{\gamma(d)^{\,2}}{d}\right)^{1/2}\bigg(\sum_{d\le z}|\Delta(x; d, a)|\bigg)^{1/2}
\end{align*}
\noindent by the Cauchy-Schwarz inequality. By Lemma ~\ref{thm:bomb} and the choice of $2A+4,$ we observe that 
\begin{equation*}
\bigg(\sum_{d\le z}|\Delta(x; d, a)|\bigg)^{1/2}\ll \frac{x^{1/2}}{(\log x)^{A+2}}, 
\end{equation*}
 and by Lemma ~\ref{lprop}, that 
\begin{equation*}
\bigg(\sum_{d\le z}\frac{\gamma(d)^{\,2}}{d}\bigg)^{1/2}\ll \bigg(\sum_{d\le z}\frac{\tau(d)^{\,2}}{d}\bigg)^{1/2} \ll (\log z)^{2} \ll (\log x)^{2},   
\end{equation*}
where the second inequality follows from the fact that \begin{equation*}
    \sum_{n\le x}\tau(n)^{2}=C_{2}x(\log x)^{3}+O(x(\log x)^{2})
\end{equation*}
for some constant $C_2 > 0$ (see, e.g., \cite[Theorem 1]{luca2017}) and partial summation.
\end{proof}
\begin{remark}
It is worth mentioning that the weighted Bombieri-Vinogradov theorem obtained above can be used in the proof of \cite[Theorem 1.2]{jha-rnt} to improve the error term of the relative density estimate obtained. The authors originally obtained an error term of the form
\begin{equation*}
    E(x)\ll \frac{x}{\log x}\cdot \frac{\log \log \log x}{\exp\big(\delta (\log \log x)^{1/2}(\log \log \log x)^{1/2}\big)},
\end{equation*}
for some $\delta>0,$ while employing Lemma \ref{thm:bomb-modify} instead leads to an effective upper bound on the error term of the form 
\begin{equation*}
    E(x)\ll \frac{x}{(\log x)^{A}}
\end{equation*}
for any $A>0$.
\end{remark}

\section{Proofs of Theorems \ref{THM:MAIN} and \ref{THM:P-SERIES}}\label{sec:3}
Throughout the section, the letter $q$, with or without subscript, denotes a prime number not dividing $a_2$, while the letter $j$, with or without subscript, denotes a positive integer. 
We have
\begin{equation*}
    \log g_{u}(p-1)=\sum_{q^{j} \, \mid\mid\, g_{u}(p-1)} j\log p=\sum_{q^{j} \, \mid \, g_{u}(p-1)}\log p=\sum_{p\, \equiv 1 \Mod{\l(q^{j})}}\log p
\end{equation*}
for all primes $p$ where the last equality follows from Lemma ~\ref{lem:basic}-(\ref{ite: gcd}).
Consequently, for any prime $p$ and for all $x>0$, it follows that
\begin{align*}\label{equi}
\sum_{p \,\leq\, x}(\log g_{u}(p-1))^{\lambda} &= \sum_{p \,\leq\, x}\bigg(\sum_{p \, \equiv 1 \Mod{\l(q^{j})}}\log q \bigg)^{\lambda} \\
&=\sum_{p \,\leq\, x} \sum_{\l(q_{1}^{j_1}) \, \mid \, p-1, \, \ldots ,\, \l(q_{\lambda}^{j_{\lambda}}) \, \mid \, p-1} \log q_{1} \cdots \log q_{\lambda} \\
&= \sum_{p \,\leq\, x} \sum_{p\, \equiv \,1 \Mod{\l([q_{1}^{j_{1}},\, \ldots,\, q_{\lambda}^{j_{\lambda}}]}} \log q_{1} \cdots \log q_{\lambda}\\
&=\sum_{q_{1}^{j_{1}}, \, \ldots,\, q_{\lambda}^{j_{\lambda}}} \log q_{1} \cdots \log q_{\lambda} \sum_{\substack{p \,\leq\, x \\ {p\, \equiv \,1 \Mod{\l([q_{1}^{j_{1}},\, \ldots,\, q_{\lambda}^{j_{\lambda}}]}}}} 1 \\&= \sum_{q_{1}^{j_{1}},\, \ldots,\, q_{\lambda}^{j_{\lambda}}} \log q_{1} \cdots \log q_{\lambda} \, \pi(x; \l(q_{1}^{j_{1}},\ldots,q_{\lambda}^{j_{\lambda}}), 1) \\&= \sum_{(n,\,a_{2})=1} \pi(x; \l(n), 1) \sum_{n=[q_{1}^{j_{1}},\ldots,q_{\lambda}^{j_{\lambda}}]}\log q_{1} \cdots \log q_{\lambda}.
\end{align*}
where third equality follows from Lemma ~\ref{lem:basic}-(\ref{ite:lcm}).
By the exact same reasoning as \cite[Section 3]{sanna} we have
\begin{align*}
    \sum_{n=[q_{1}^{j_{1}},\ldots,q_{\lambda}^{j_{\lambda}}]}\log q_{1} \cdots \log q_{\lambda} 
    &= \sum_{\lambda_1 + \cdots + \lambda_s = \lambda} \frac{\lambda !}{\lambda_1 ! \cdots \lambda_s !} \prod_{i=1}^s (h_i^{\lambda_i} - (h_i-1)^{\lambda_i})(\log q_i)^{\lambda_i}\\
    &= \rho_{\lambda}(n).
\end{align*}
Therefore
\begin{equation}
    \sum_{p \,\leq\, x}(\log g_{u}(p-1))^{\lambda}=\sum_{\substack{d \, \le \, x \\ (d,\, a_{2})\,=\,1}}\rho_{\lambda}(d)\pi(x;\l(d),1)\label{eqn:main}
\end{equation}
for all $x>0$. 
\subsection{Unconditional bounds}
Choose a large constant $A>0$ and $B=B(A)>0$ in the statement of the weighted Bombieri-Vinogradov theorem (Lemma \ref{thm:bomb-modify}). Set $y\, \colonequals\, x^{1/4}/\sqrt{2}(\log x)^{B/2}$ for brevity.
We split the right hand side of (\ref{eqn:main}) as
\begin{align*}
    \sum_{p \,\leq\, x}(\log g_{u}(p-1))^{\lambda}&=\sum_{\substack{d \, \le \, y \\ (d,\,a_{2})\,=\,1}}\rho_{\lambda}(d)\pi(x;\l(d),1)+\sum_{\substack{d \, \ge \, y \\ (d,\,a_{2})\,=\,1}}\rho_{\lambda}(d)\pi(x;\l(d),1)\\&=E_{1}+E_{2}.
\end{align*}
First we estimate $E_{1}$.
\begin{align*}
    E_{1}&=\sum_{\substack{d \, \le \, y \\ (d,\,a_{2})\,=\,1}}\Bigg(\frac{\pi(x)\rho_{\lambda}(d)}{\varphi\big(\l(d)\big)}+\rho_{\lambda}(d)\Delta(x;\l(d),1)\Bigg)\\&=\sum_{\substack{d \, \le \, y \\ (d,\,a_{2})\,=\,1}}\pi(x)\cdot \frac{\rho_{\lambda}(d)}{\varphi\big(\l(d)\big)}+\sum_{\substack{d \, \le \, y \\ (d,\,a_{2})\,=\,1}}\rho_{\lambda}(d) \Delta(x;\l(d),1).\numberthis\label{eqn:E1}
\end{align*}
As $y\rightarrow\infty$, the left sum in (\ref{eqn:E1}) converges to $P_{u,\lambda}\pi(x)$ where 
\begin{equation*}
    P_{u,\lambda} \colonequals \sum_{(n,\, a_{2}) \,=\, 1}\frac{\rho_{\lambda}(n)}{\varphi \big(\ell_{u}(n)\big)}.
\end{equation*}
This follows by convergence of the sum in Lemma~\ref{lem:bound-phi-ell}. 
For the right sum in (\ref{eqn:E1}), if $d\, \le \, y$, then $\l(d) \, \le \, x^{1/2}/(\log x)^{B}$ by Lemma \ref{lem:basic}-(\ref{ite:basic:upper}).
Therefore, by Lemmas~\ref{lm:rho} and \ref{thm:bomb-modify}, 
\begin{align*}
   \sum_{\substack{d \, \le \, y \\ (d,\,a_{2})\,=\,1}}\rho_{\lambda}(d) \Delta(x;\l(d),1) &\le (\log x)^{\lambda}  \sum_{\substack{d \, \le \, y \\ (d,\,a_{2})\,=\,1}}|\Delta(x;\l(d),1)| \\& \le(\log x)^{\lambda} \sum_{d\,\leq\, \sqrt{x}/(\log x)^{B}} \gamma(d)\,|\Delta(x; d, 1)|\\&\ll\frac{x}{(\log x)^{A-\lambda}}.
\end{align*}
For $E_{2},$ we employ the trivial bound $\pi(x;b,a)\leq x/b$ and Lemma \ref{lem:bound-ell} as follows--
\begin{align*}
E_{2}=\sum_{\substack{d \, \ge \, y \\ (d,\,a_{2})\,=\,1}}\rho_{\lambda}(d)\pi(x;\l(d),1)\le x\sum_{\substack{d \, \ge \, y \\ (d,\,a_{2})\,=\,1}}\frac{\rho_{\lambda}(d)}{\l(d)}\ll_{u,\lambda} \frac{x}{y^{1/(1+3\lambda)-\e}}
\end{align*}
where $\varepsilon \in (0,1/5]$.
Thus, finally we obtain that
\begin{equation*}
    \sum_{p \,\leq\, x}(\log g_{u}(p-1))^{\lambda}=P_{u,\lambda}\pi(x)+E_{u,\lambda}(x)
\end{equation*} 
where 
\begin{equation*}
    E_{u,\lambda}(x)\ll \frac{x}{(\log x)^{A}}
\end{equation*}
for any $A>0.$

\subsection{Bounds conditional on GRH}
Let $z$ be a parameter to be chosen later.
Just as before, we split (\ref{eqn:main}) as
\begin{align*}
    \sum_{p \,\leq\, x}(\log g_{u}(p-1))^{\lambda}&=\sum_{\substack{d \, \le \, z \\ (d,\,a_{2})\,=\,1}}\rho_{\lambda}(d)\pi(x;\l(d),1)+\sum_{\substack{d \, \ge \, z \\ (d,\,a_{2})\,=\,1}}\rho_{\lambda}(d)\pi(x;\l(d),1)\\&=E'_{1}+E'_{2}.
\end{align*}
First we look at $E'_{1}$. 
Proceeding as before we just need to look at the sum 
\begin{equation*}
    \sum_{\substack{d \, \le \, z \\ (d,\,a_{2})\,=\,1}}\rho_{\lambda}(d) \Delta(x;\l(d),1).
\end{equation*}
Again, we obtain that 
\begin{equation*}
    \sum_{\substack{d \, \le \, z \\ (d,\,a_{2})\,=\,1}}\rho_{\lambda}(d) \Delta(x;\l(d),1)\ll z\cdot (\log x)^{\lambda}\cdot x^{1/2}(\log x)
\end{equation*}
by Lemmas \ref{lm:rho} and \ref{thm:Siegel-Walfisz}.
Similarly, for $E'_{2}$, we get that 
\begin{equation*}
    E'_{2}\ll_{u,\lambda} \frac{x}{z^{1/(1+3\lambda)-\varepsilon}}.
\end{equation*}
Thus,
\begin{equation*}
    \sum_{p \,\leq\, x}(\log g_{u}(p-1))^{\lambda}=P_{u,\lambda}\pi(x)+E_{u,\lambda}(x),
\end{equation*} 
where 
\begin{equation*}
    E_{u,\lambda}(x)\ll_{u,\lambda} \frac{x}{z^{1/(1+3\lambda)-\varepsilon}}+ z (\log x)^{\lambda}\cdot x^{1/2}(\log x).
\end{equation*}
It is routine to check that the optimal value of $z,$ while satisfying all restrictions, is $z=x^{(3\lambda+1)/(6\lambda+4)}$. In conclusion,
\begin{equation*}
    E_{u,\lambda}(x)\ll_{u,\lambda} x^{(6 \lambda+3)/(6\lambda +4) +\varepsilon}
\end{equation*}
for all sufficiently large $x$ depending on $a_{1}\, ,a_{2},\, \lambda$ and $\varepsilon$.

\section{Proof of Theorem \ref{THM:P-ASY}}\label{sec:4}
It is obvious that $M_{\lambda, u} \le P_{\lambda, u}$ from Theorems \ref{thm:M-series} and \ref{THM:P-SERIES}. So it is sufficient to prove that $\log M_{\lambda, u}\ge \lambda\log\lambda + O(\lambda)$ and  $\log P_{\lambda, u}\le \lambda\log\lambda + O(\lambda).$
We shall use the following basic lemma on the incomplete gamma function to estimate some integrals.
\begin{lemma}\label{lem:incomplete-gamma}
For positive integer $n$ and $x\ge 0$, we have  
$$\Gamma(n,x) = (n-1)!\, e^{-x}\sum_{k=0}^{n-1}\frac{x^k}{k!}$$
\end{lemma}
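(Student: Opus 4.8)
The plan is to prove the identity by induction on $n$, using integration by parts to set up a recurrence. For the base case $n = 1$, a direct computation gives $\Gamma(1,x) = \int_x^\infty e^{-t}\,\mathrm d t = e^{-x}$, which matches the right-hand side $0!\,e^{-x}\sum_{k=0}^{0} x^k/k! = e^{-x}$.

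For the inductive step, I would integrate $\Gamma(n+1,x) = \int_x^\infty t^n e^{-t}\,\mathrm d t$ by parts with $u = t^n$ and $\mathrm d v = e^{-t}\,\mathrm d t$. Since $t^n e^{-t}\to 0$ as $t\to\infty$, the boundary term at infinity vanishes and one obtains the recurrence $\Gamma(n+1,x) = x^n e^{-x} + n\,\Gamma(n,x)$. Substituting the inductive hypothesis for $\Gamma(n,x)$ yields $\Gamma(n+1,x) = x^n e^{-x} + n!\,e^{-x}\sum_{k=0}^{n-1} x^k/k!$, and absorbing the first term as the $k = n$ summand (noting that $x^n e^{-x} = n!\,e^{-x}\cdot x^n/n!$) gives exactly $n!\,e^{-x}\sum_{k=0}^{n} x^k/k!$, which closes the induction.

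An alternative I would keep in mind is to verify the identity by differentiation: writing $f(x)$ for the claimed right-hand side, a short reindexing shows that the two sums arising in $f'(x)$ cancel except for a single term, giving $f'(x) = -x^{n-1}e^{-x}$. This coincides with $\tfrac{\mathrm d}{\mathrm d x}\Gamma(n,x)$ by the fundamental theorem of calculus, and since both $f(x)$ and $\Gamma(n,x)$ tend to $0$ as $x\to\infty$ (the former because a polynomial is dominated by $e^{-x}$), the two functions agree for all $x\ge 0$. Either route is entirely routine; there is no genuine obstacle here, and the only point demanding a little care is the index bookkeeping in the reindexing, or equivalently the clean separation of the $k=n$ term in the inductive step.
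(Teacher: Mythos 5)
Your proof is correct, but there is nothing in the paper to compare it against: the paper disposes of this lemma with a one-line citation to \cite[Theorem 3]{jameson_2016} and gives no argument of its own. Your induction is the standard one and is complete: the base case $\Gamma(1,x)=e^{-x}$ is right, the integration by parts (with the boundary term at infinity vanishing since $t^n e^{-t}\to 0$) correctly yields the recurrence $\Gamma(n+1,x)=x^n e^{-x}+n\,\Gamma(n,x)$, and absorbing $x^n e^{-x}$ as the $k=n$ summand closes the induction cleanly. The alternative you sketch---checking $f'(x)=-x^{n-1}e^{-x}=\tfrac{\mathrm d}{\mathrm dx}\Gamma(n,x)$ and matching the limits at infinity---is equally valid and arguably shorter, since the telescoping of the two sums in $f'(x)$ is immediate. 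Either route makes the paper self-contained on this point, which is a small improvement over deferring a genuinely elementary fact to an external reference; the only thing the citation buys the paper is brevity.
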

\begin{proof}
 See \cite[Theorem 3]{jameson_2016}.
\end{proof}
We split the proof into two parts-- the lower bound and the upper bound.
\subsection{Lower bound on $M_{u,\lambda}$} 
Note that $\rho_{\lambda}(m) = (\log m)^{\lambda}$ whenever $m$ is prime. Using Lemma \ref{lem:basic}--(\ref{ite:basic:upper}), it follows that 
$$M_{\lambda,u} = \sum_{(n, a_2) = 1}\frac{\rho_{\lambda}(n)}{\l(n)} \ge \frac12 \sum_{p > a_2}\frac{(\log p)^{\lambda}}{p^2}.$$
By partial summation and the prime number theorem,
\begin{align*}
    \sum_{p>x}\frac{(\log p)^{\lambda}}{p^2} &= -\frac{\pi(x)(\log x)^{\lambda}}{x^2} + \int_{x}^{\infty} \pi(t)\left( \frac{2(\log t)^{\lambda}}{t^3} - \frac{\lambda (\log t)^{\lambda -1}}{t^3}\right)\mathrm d t\\
    &= -\frac{(\log x)^{\lambda -1}}{x} + O\Big(\frac{(\log x)^{\lambda -2}}{x}\Big) - \int_{x}^{\infty}\Big(-\frac{2(\log t)^{\lambda -1}}{t^2} \\ 
    &\quad\, +\frac{(\lambda-2)(\log t)^{\lambda -2}}{t^2}+ O\Big(\frac{\lambda(\log t)^{\lambda -3}}{t^2}\Big)\Big)\mathrm d t\\
    &=  -\frac{(\log x)^{\lambda -1}}{x} + O\left(\frac{(\log x)^{\lambda -2}}{x}\right) + 2\Gamma(\lambda, \log x) \\ 
    &\quad\, - (\lambda -2)\Gamma(\lambda -1, \log x) + O(\lambda\Gamma(\lambda -2, \log x))\\
    &= 2\Gamma(\lambda, \log x)- (\lambda -2)\Gamma(\lambda -1, \log x) + O_x(\lambda(\lambda -3)!).
\end{align*}
The last step follows from Lemma \ref{lem:incomplete-gamma}.
Note that $\Gamma(\lambda, \log x) \ge (1-o_{\lambda}(1))(\lambda - 1)!$
and $\Gamma(\lambda - 1,\log x) \le (\lambda -2)!.$
Therefore, $2\Gamma(\lambda, \log x)- (\lambda -2)\Gamma(\lambda -1, \log x) \ge (1-o_\lambda(1))\lambda(\lambda -2)!$ and we conclude that $M_{\lambda, u} \ge (0.5-o_{\lambda}(1))\lambda(\lambda -2)!,$ completing the proof of the lower bound.

\subsection{Upper bound on $P_{\lambda, u}$}
We show bounds on tail sums of $P_{\lambda, u}$:
\begin{equation*}
    \sum_{\substack{n \,\geq\, y \\[1pt] (n,\, a_{2}) \,=\, 1}}\frac{\rho_{\lambda}(n)}{\varphi\big(\l(n)\big)}
\end{equation*}
Observe that $\rho_\lambda(n) = 0$ when $\omega(n) > \lambda.$
Consider the sum
\begin{equation*}
    \sum_{\substack {(n ,\, a_{2}) \, =\, 1 \\[1pt]  \omega(n) \, \le \, \lambda \\ P(n)\, \ge \, y }}\frac{\rho_{\lambda}(n)}{\varphi\big(\l(n)\big)}
\end{equation*}
and set $y = w^{12}.$
By choosing $\varepsilon=1/4$ in Lemma~\ref{lm: prime} we have that  
\begin{equation*}
    S(w) \colonequals \sum_{\substack {(n,\, a_{2}) \, =\, 1 \\[1pt]  \omega(n) \, \le \, \lambda \\ P(n)\, \ge \, y }}\frac1{\l(n)} \le \frac{C_u}{w}
\end{equation*} 
for all $w\ge w_u$ where $C_u$ and $w_u$ are constants depending on $u$.
Since $\varphi(n)\gg n/\!\log{\log{n}}$ (see, e.g., \cite[Chapter I.5, Theorem 4]{tenebaum}) and $\l(n)\leq 2n^2$ (see Lemma \ref{lem:basic}-(\ref{ite:basic:upper})) for all positive integers $n$, it follows that 
\begin{align*}
    \sum_{\substack {(n,\, a_{2}) \, =\, 1 \\[1pt]  \omega(n) \, \le \, \lambda \\ P(n)\, \ge \, y }}\frac{\rho_{\lambda}(n)}{\varphi\big(\l(n)\big)} 
    &\le \sum_{\substack {(n,\, a_{2}) \, =\, 1 \\[1pt]  \omega(n) \, \le \, \lambda \\ P(n)\, \ge \, y }}\frac{(\log n )^{\lambda
    +1}}{\l(n)} = S(w)(\log w)^{\lambda+1} + (\lambda+1)\int_{w}^{\infty} S(t) \frac{(\log t)^{\lambda}}{t} \mathrm{d} t \\
    &\le C_u\bigg( \frac{(\log w)^{\lambda+1}}{w} + (\lambda+1) \int_{w}^{\infty} \frac{(\log t)^{\lambda }}{t^2} \mathrm{d} t\bigg) \\
    &\le C_u \bigg(\frac{(\log w)^{\lambda+1}}{w} + (\lambda+1) \Gamma(\lambda+1,\log w)\bigg)
\end{align*} 
for all $w\ge w_u$. 
Hence, by substituting $w = w_u$ and $y = y_u\colonequals w_u^{12},$ we obtain that 
\begin{equation*}
    \sum_{\substack {(n ,\, a_{2}) \, =\, 1 \\[1pt]  \omega(n) \, \le \, \lambda \\ P(n)\, \ge \, y_u }}\frac{\rho_{\lambda}(n)}{\varphi\big(\l(n)\big)}\le C_u \bigg( \frac{(\log w_u)^{\lambda+1}}{w_u} +(\lambda+1) \Gamma(\lambda+1,\log w_u )\bigg),
\end{equation*}
 which implies that for all sufficiently large $\lambda,$ it follows that
\begin{equation}\label{eqn:sum1}
    \sum_{\substack {(n ,\, a_{2}) \, =\, 1 \\[1pt]  \omega(n) \, \le \, \lambda \\ P(n)\, \ge \, y_u }}\frac{\rho_{\lambda}(n)}{\varphi\big(\l(n)\big)}\le 2C_u (\lambda+1)\Gamma(\lambda+1,\log w_u ) \le 2C_u(\lambda + 1)!.
\end{equation}
Now we focus on the sum 
\begin{equation*}
    \sum_{\substack {(n ,\, a_{2}) \, =\, 1 \\[1pt]  \omega(n) \, \le \, \lambda \\ P(n)\, \le \, y_u \\ n\,>\,w_u }}\frac{\rho_{\lambda}(n)}{\varphi\big(\l(n)\big)}.
\end{equation*}
Using Lemma \ref{lem:Phi-bound}, for all sufficiently large $\lambda,$ we have that 
\begin{align*}
    \sum_{\substack {(n ,\, a_{2}) \, =\, 1 \\[1pt]  \omega(n) \, \le \, \lambda \\ P(n)\, \le \, y_u \\n \, > \, w_u}}\frac{\rho_{\lambda}(n)}{\varphi\big(\l(n)\big)} &\le \sum_{\substack {P(n) \,\leq\, y_u \\[1pt]  \omega(n) \, \le \, \lambda \\ n \, > \, w_u}}\frac{(\log n)^{\lambda+1}}{n}\\
    &=  -\frac{\Phi(w_u,y_u)(\log w_u)^{\lambda+1}}{w_u}+\int_{w_u}^{\infty}\frac{\Phi(t,y_u) ((\log t)^{\lambda+1}-(\lambda+1) (\log t)^{\lambda})}{t^2} \mathrm{d} t \\
    & \ll  \int_{w_u}^{\infty} \frac{(\log t)^{\lambda+1+y_u}}{t^{2}} \mathrm{d}t \\
    &= \Gamma(\lambda +1+y_u, \log w_u) \\
    &\le (\lambda + \lceil y_u\rceil)! \numberthis \label{eqn:sum2}
\end{align*}
Combining (\ref{eqn:sum1}) and (\ref{eqn:sum2}), we we obtain the desired upper bound.

 \section{Proof of Theorem \ref{THM:MAXBOUND}}\label{sec:5}
 In what follows, for relatively prime positive integers $a$ and $d$, $p(a,d)$ denotes the least prime in the arithmetic progression $(a + dn)_{n = 1}^{\infty}$. The remainder of this section depends heavily on different variants of Linnik's theorem on least primes in arithmetic progressions.
 If $n$ divides $g_{u}(p-1),$ we have $p\equiv 1 \Mod{\l(n)}$ (see Lemma ~\ref{lem:basic}-(\ref{ite: gcd})). Henceforth, we need to look for the largest $n$ such that $p(1,\l(n))\le x$.
 Since $|\Delta_{u}|\ne 1,$ it follows from Lemma ~\ref{lem:basic}-(\ref{ite:ell}) that there exists a prime $p\mid \Delta_{u}$ such that $\l(p^{r})=p^{r}$ for any positive integer $r.$ Unconditionally, the best known bound on $p(a,d)$ is of the form $p(a,d) \ll d^{\,2.1115}$ when $d$ varies over powers of a fixed prime number $q$ (see \cite[Theorem 3.6]{li}). We solve for the maximal $n$ such that
  \begin{equation*}
     p(1,\l(q^{n})) \ll (\l(q^{n}))^{\,2.1115}=q^{ 2.1115n} \le x,
 \end{equation*}
 which gives us the lower bound 
 \begin{equation*}
    \mathcal{G}(x)\gg x^{0.4736}.
 \end{equation*}
By an application of Hypothesis ~\ref{hypo:mont}, one gets the folklore conjecture of Chowla ~\cite{chowla1934} that $p(a,d)\ll_{\varepsilon} d^{\,1+\varepsilon}$ for any $\varepsilon>0$. Thus, applying the same technique as before, conditionally, we solve for maximal $n$ satisfying  
\begin{equation*}
     p(1,\l(q^{n})) \ll_{\varepsilon} q^{n(1+\varepsilon)} \le x 
 \end{equation*}
 giving us the lower bound
 \begin{equation*}
    \mathcal{G}(x) \gg x^{1-o(1)}.
 \end{equation*}
\begin{remark}
If we consider the case of Lucas sequence $(u_{n})_{n\ge 0}$ with $\Delta_{u}=1$, then combining the best known upper bound on $p(a,d)$ of the form $p(a,d)\ll d^{\, 5}$ (see \cite[Theorem 2.1]{linnik}) with the fact $\l(n)\le 2n^{2}$ (see Lemma \ref{lem:basic}-(\ref{ite:basic:upper})) we get that $\mathcal{G}(x)\gg x^{0.1}$ unconditionally. Conditional on Montgomery's conjecture (Hypothesis \ref{hypo:mont}), we can improve this bound to $\mathcal{G}(x)\gg x^{0.5 -o(1)}$.
\end{remark}

\let\bfseries\relax
{\setstretch{1}
 \printbibliography
}

\bigskip Abhishek Jha,

Indraprastha Institute of Information Technology, New Delhi, India

Email: \emaillink{abhishek20553@iiitd.ac.in}

\bigskip Ayan Nath,

Chennai Mathematical Institute, Siruseri, Tamil Nadu, India

Email: \emaillink{ayannath@cmi.ac.in}

\end{document}